\date{22 October, 2012. Revised: 29 November, 2013.}

\documentclass[11pt,reqno]{amsart}
\usepackage{latexsym,amsmath,amsfonts,amscd,amssymb}
\usepackage[all]{xy}


\setlength{\oddsidemargin}{15pt} \setlength{\evensidemargin}{15pt}
\setlength{\textwidth}{420pt} \setlength{\textheight}{640pt}
\setlength{\topmargin}{0pt}

\theoremstyle{plain}
\newtheorem{theorem}{Theorem}[section]
\newtheorem{corollary}[theorem]{Corollary}
\newtheorem{lemma}[theorem]{Lemma}
\newtheorem{proposition}[theorem]{Proposition}
\theoremstyle{definition}
\newtheorem{definition}[theorem]{Definition}

\numberwithin{equation}{section}

\newcommand{\s}{\sigma}

\newcommand{\ba}{\mathbf{a}}

\newcommand{\bn}{\mathbf{n}}

\newcommand{\bT}{\bar{\bar{T}}}

\newcommand{\cA}{\mathcal{A}}

\newcommand{\cD}{\mathcal{D}}
\newcommand{\cE}{\mathcal{E}}
\newcommand{\cF}{\mathcal{F}}

\newcommand{\cM}{\mathcal{M}}

\newcommand{\cN}{\mathcal{N}}

\newcommand{\cO}{\mathcal{O}}

\newcommand{\cR}{\mathcal{R}}
\newcommand{\cS}{\mathcal{S}}
\newcommand{\cT}{\mathcal{T}}
\newcommand{\cU}{\mathcal{U}}

\renewcommand{\AA}{\mathbb{A}}
\newcommand{\QQ}{\mathbb{Q}}
\newcommand{\LL}{\mathbb{L}}
\newcommand{\RR}{\mathbb{R}}
\newcommand{\CC}{\mathbb{C}}

\newcommand{\KK}{\mathbb{K}}
\newcommand{\PP}{\mathbb{P}}
\newcommand{\ZZ}{\mathbb{Z}}
\newcommand{\un}{\boldsymbol{1}}
\newcommand{\RC}{\cR_C}
\newcommand{\wRC}{\widetilde{\cR}_C}

\newcommand{\x}{\times}
\newcommand{\ox}{\otimes}

\newcommand{\frM}{{\frak M}}
\newcommand{\frh}{{\frak h}}
\newcommand{\frs}{{\frak s}}
\newcommand{\frS}{{\frak S}}

\newcommand{\frf}{{\frak f}}
\newcommand{\hs}{{\frh\frs}}

\newcommand{\fhs}{{\frf\frh\frs}}

\newcommand{\Sym}{\mathrm{Sym}}

\newcommand{\Jac}{\mathrm{Jac}\,}
\newcommand{\Bl}{\mathrm{Bl}}
\DeclareMathOperator{\rk}{rk}
\DeclareMathOperator{\codim}{codim} \DeclareMathOperator{\im}{im}

\DeclareMathOperator{\Hom}{Hom} \DeclareMathOperator{\Ext}{Ext}
\DeclareMathOperator{\End}{End} \DeclareMathOperator{\Gr}{Gr}
\DeclareMathOperator{\GL}{GL}\DeclareMathOperator{\Sp}{Sp}
\DeclareMathOperator{\Hg}{Hg}

\newcommand{\Mot}{{\frM}ot}

\newcommand{\scp}{{\s_c^+}}
\newcommand{\smp}{{\s_m^+}}
\newcommand{\scm}{{\s_c^-}}

\newcommand{\VarC}{\mathcal{V}ar_{\CC}}
\newcommand{\SmVarC}{\mathcal{S}m\mathcal{V}ar_{\CC}}

\setlength{\marginparwidth}{1.9cm}
\let\oldmarginpar\marginpar
\renewcommand\marginpar[1]{\oldmarginpar{\tiny\bf\begin{flushleft} #1
\end{flushleft}}}


\title[Motives and Hodge Conjecture for moduli of pairs]{Motives and the Hodge
Conjecture for moduli spaces of pairs}

\subjclass[2000]{Primary: 14F45. Secondary: 14D20, 14H60.}
\keywords{Moduli space, complex curve, vector bundle, motives,
Hodge conjecture}

\author[V. Mu\~noz]{Vicente Mu\~noz}
  \address{Facultad de Matem\'{a}ticas \\ Universidad Complutense
  de Madrid \\ Plaza Ciencias 3
  \\ 28040 Madrid \\ Spain}
  \email{vicente.munoz@mat.ucm.es}

\author[A. G. Oliveira]{Andr\'e G. Oliveira}
  \address{Departamento de Matem\'atica \\ 
  Escola de Ci\^encias e Tecnologia \\ 
  Universidade de Tr\'{a}s-os-Montes e Alto Douro \\
Quinta dos Prados \\ 5001-801 Vila Real \\ Portugal}
  \email{agoliv@utad.pt}

\author[J. S\'anchez]{Jonathan S\'anchez}
  \address{Facultad de Matem\'{a}ticas \\ Universidad Complutense
  de Madrid \\ Plaza Ciencias 3
  \\ 28040 Madrid \\ Spain}
  \email{jnsanchez@mat.ucm.es}

\thanks{First and third authors partially supported by Spanish MICINN grant MTM2010-17389. Second author partially supported by FCT (Portugal) with national funds
through the projects PTDC/MAT/099275/2008, PTDC/MAT/098770/2008 and PTDC/MAT/120411/2010 and through Centro de Matem\'atica da
Universidade de Tr\'as-os-Montes e Alto Douro (PEst-OE/MAT/UI4080/2011). Thanks also to the support by ITGP (Interactions of Low-Dimentional Topology and Geometry with Mathematical Physics), an ESF RNP}

\begin{document}
\maketitle

\begin{abstract}
 Let $C$ be a smooth projective curve of genus $g\geq 2$ over
 $\CC$. Fix $n\geq 1$, $d\in \ZZ$.
 A pair $(E,\phi)$ over $C$ consists of an algebraic vector bundle
 $E$ of rank $n$ and degree $d$ over $C$ and a section $\phi \in H^0(E)$.
 There is a concept of stability for pairs which depends on a
 real parameter $\tau$. Let $\frM_\tau(n,d)$ be the moduli space
 of $\tau$-polystable pairs of rank $n$ and degree $d$ over $C$. 
We prove that for a generic curve $C$, the moduli
space $\frM_\tau(n,d)$ 
satisfies the Hodge Conjecture for $n \leq 4$.
For obtaining this, we prove first that $\frM_\tau(n,d)$ 
 is motivated by $C$.
\end{abstract}

\section{Introduction} \label{sec:introduction}

Let $C$ be a smooth projective curve of genus $g\geq 2$ over the
field of complex numbers. Fix $n \geq 1$ and $d\in\ZZ$, and a line
bundle $L_0$ of degree $d$.
We denote by $M(n,d)$ the moduli space of polystable vector bundles of rank
$n$ and degree $d$ over $C$, and by $M(n,L_0)$
the moduli space of polystable bundles $E$ with
determinant $\det(E)\cong L_0$.

A pair $(E,\phi)$ over $C$ consists of a vector bundle $E$ of rank $n$ and
degree $d$ together with a section $\phi\in H^0(E)$. There
is a concept of stability for a pair which depends on the choice of
a parameter $\tau \in \RR$. This gives a collection of moduli spaces
of $\tau$-polystable pairs $\frM_\tau(n,d)$ and moduli spaces of
pairs with fixed determinant, $\frM_\tau(n,L_0)$, which are projective
varieties. These moduli spaces have been studied by many authors
\cite{B,BD,GP,Mu1,Mu2,MOV1}.
The range of the parameter $\tau$ is an open interval $I$ split by a
finite number of critical values $\tau_c$. For non-critical $\tau$,
$\frM_\tau(n,d)$ is a smooth projective variety.
The main goal of this paper is to prove the following result.

\begin{theorem} \label{thm:main-HC}
  Let $\tau \in I$ be non-critical and $n \leq 4$. Suppose that $C$ is
  a very general curve. Then the moduli spaces $\frM_\tau(n,d)$ 
  satisfy the Hodge Conjecture.
\end{theorem}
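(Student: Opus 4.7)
The plan is to prove Theorem \ref{thm:main-HC} in two steps. First, I would establish that the Chow motive of $\frM_\tau(n,d)$ is ``motivated by $C$,'' meaning it lies in the thick tensor subcategory of Chow motives generated by $h(C)$ and the Lefschetz motive $\LL$. Once this is in hand, the Hodge Conjecture for $\frM_\tau(n,d)$ reduces to the Hodge Conjecture for powers $C^k$; for $C$ a very general curve this is classical, since the algebra of rational Hodge classes on $C^k$ is generated by the class of $C$ together with diagonal classes, all of which are algebraic. Hence the whole problem is the motivic statement.

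To prove that $\frM_\tau(n,d)$ is motivated by $C$, I would exploit the variation of the moduli space with $\tau$. The interval $I$ is split by finitely many critical values $\tau_c$, and when $\tau$ crosses such a value, $\frM_\tau(n,d)$ undergoes an explicit elementary transformation: a blow-up of a smooth flip locus $S^-_{\tau_c} \subset \frM_{\scm}(n,d)$ followed by a contraction of the exceptional divisor to a flip locus $S^+_{\tau_c} \subset \frM_{\scp}(n,d)$ along its projective-bundle ruling. Since a blow-up along a smooth centre $Z \subset X$ of codimension $k$ alters the Chow motive by $h(Z)\otimes(\LL\oplus\LL^2\oplus\cdots\oplus\LL^{k-1})$, knowing the motives of all flip loci propagates ``motivated by $C$'' across every wall. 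So the strategy is to find a single chamber where the statement holds and then to show, inductively on $n$, that every flip locus is motivated by $C$.

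For the base chamber I would take $\tau$ in one of the extremal subintervals: for $\tau$ near one endpoint of $I$, $\frM_\tau(n,d)$ admits a direct description in terms of projective bundles over moduli of vector bundles $M(n,d)$, or over Brill--Noether loci, or over symmetric products $\Sym^k C$; all of these are already motivated by $C$ (symmetric products trivially, and $M(n,d)$ for $n\le 4$ by the motivic computations of del Ba\~no, Earl--Kirwan and related work). The flip loci themselves are fibre bundles with projective, Grassmannian or symmetric product fibres over products of the form $\frM_{\tau'}(n',d')\times\frM_{\tau''}(n'',d'')\times\Sym^{\bullet}C$ with $n',n''<n$, so the inductive hypothesis on the rank supplies the required motivic information.

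The main obstacle will be the explicit combinatorial and geometric control of the flip loci for $n=3$ and especially $n=4$. For these ranks the subobjects destabilising a pair across a wall can stratify in several ways, and one must realise each stratum as a genuine fibre bundle whose base and fibre are separately motivated by $C$, not merely match Hodge numbers. This demands identifying the flip loci as projectivisations of extension sheaves over products of lower-rank pairs moduli, verifying that the requisite relative $\Ext$ sheaves are locally free, and pushing the motivic identities through these iterated fibrations compatibly with the blow-up/blow-down description. Once this bookkeeping is complete, a straightforward induction on $n\le 4$ yields the motivic conclusion, and hence the Hodge Conjecture, for every non-critical $\tau\in I$.
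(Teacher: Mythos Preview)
Your two-step strategy (show $\frM_\tau(n,d)$ is motivated by $C$, then invoke the Hodge Conjecture for $C^k$ on a very general curve) is exactly the paper's, and the wall-crossing induction on $n$ is the right engine. However, your mechanism for propagating across walls is incorrect: for $n\ge 3$ the flip loci $\cS_{\scpm}$ are \emph{not} smooth, so the blow-up/blow-down description and the motive formula $h(\Bl_Z X)=h(X)\oplus h(Z)\otimes(\LL\oplus\cdots\oplus\LL^{k-1})$ do not apply. The flip loci carry a Harder--Narasimhan stratification $\cS_{\scpm}=\bigsqcup_\bn X^\pm(\bn)$ indexed by combinatorial types $\bn$, with strata of varying dimension. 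The paper therefore works in the Grothendieck ring $K(\Mot)$ via the scissor relation $[\cN_{\scp}]-[\cS_{\scp}]=[\cN_{\scm}]-[\cS_{\scm}]$, and the task becomes showing each class $[X^\pm(\bn)]$ lies in $\RC$. Relatedly, your choice of base chamber is problematic: starting near $\sigma_m$ requires $[M(n,d)]\in\RC$, which del Ba\~no establishes only for $\gcd(n,d)=1$; the paper instead starts from the large endpoint $\sigma_M$, where $\cN_{\sigma_M^-}$ coincides with its own flip locus and is built entirely from lower-rank data.

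More seriously, you miss the step that is in fact the main technical content of the paper and the reason for the restriction $n\le 4$. Each stratum has the form $X^\pm(\bn)=\tilde X^\pm(\bn)/\frS_\bn$, where $\tilde X^\pm(\bn)$ is an iterated fibration over a product of lower-rank moduli and $\frS_\bn$ is a finite permutation group. Showing $[\tilde X^\pm(\bn)]\in\RC$ is routine by induction, but passing to the quotient is not: the implication ``$h(X/F)$ is a submotive of $h(X)$'' holds only for $X$ smooth \emph{projective}, whereas the $\tilde X^\pm(\bn)$ are merely quasi-projective. The paper handles this by explicitly rewriting each quotient stratum as a fibre product of symmetric-power bundles over $\cN'$, case by case for the types $\bn$ arising when $n\le 4$. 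A second issue you overlook is that some strata involve factors $M^s(n_i,d_i)$ with $\gcd(n_i,d_i)>1$, where no universal bundle exists and the relevant projective fibrations are not a priori Zariski locally trivial; the paper resolves this via a Brauer-group comparison.
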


The genericity condition in Theorem \ref{thm:main-HC} means 
that $C$ belongs to the complement of a countable union of closed sets of the moduli of curves.
More explicitly, the genericity condition that we need can be stated
as requiring that $C^k$ satisfies the Hodge Conjecture for any $k\geq 1$.

\medskip

In order to prove this result, we shall refine the techniques of \cite{Mu-HS}, where
the first author studied the mixed Hodge structures associated to
these moduli spaces. Here we compute the class defined by the moduli spaces
in the ring of motives $K(\Mot)$. There is a map of abelian groups
 $$
 \Theta: K(\Mot) \to K(\fhs),
 $$
where $\fhs$ is the category of filtered Hodge structures. Moreover, a smooth projective
variety satisfies the Hodge Conjecture if and only if its class in $K(\Mot)$ is contained in the kernel of $\Theta$.
Let $\RC \subset K(\Mot)$ be the subring that contains all iterated
symmetric products and self-products of the curve $C$.
We say that \emph{$X$ is motivated by $C$} if $[X]\in \RC$.
For a generic curve, $\RC\subset \ker \Theta$, and so any
projective smooth variety motivated by $C$ satisfies the Hodge Conjecture.
Here we prove that

\begin{theorem} \label{thm:motivated}
 Suppose $n\leq 4$ and let $\tau$ be generic. For \emph{any} curve $C$,
 the moduli spaces $\frM_\tau(n,d)$ 
 are motivated by $C$. 
\end{theorem}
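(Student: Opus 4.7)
The plan is to argue by induction on the rank $n \in \{1,2,3,4\}$, combining a motivic wall-crossing formula with an explicit description at one extreme value of $\tau$. For the base case $n=1$, the moduli $\frM_\tau(1,d)$ is independent of $\tau$ and isomorphic to the symmetric product $C^{(d)}$ (tensored by a fixed line bundle), which lies in $\RC$ by definition.

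For the inductive step, fix $n \in \{2,3,4\}$ and assume $\frM_{\tau'}(n',d') \in \RC$ for every $n' < n$, every $d' \in \ZZ$ and every non-critical $\tau'$. The key input is the wall-crossing formula for pair moduli,
\[
 [\frM_{\scp}(n,d)] - [\frM_{\scm}(n,d)] \;=\; [\Smas] - [\Smenos] \quad\text{in}\quad K(\Mot),
\]
which holds at every critical value $\s_c \in I$ and which I would lift from the Hodge-theoretic setting of \cite{Mu-HS} to the motivic setting. By the standard deformation analysis, the flip loci $\Smasmenos$ are projective bundles (of nontrivial extension classes) with smooth base a disjoint union of products
\[
 \frM_{\tau'}(n_1,d_1) \times M(n_2,d_2), \qquad n_1 + n_2 = n,\ n_i \geq 1,
\]
whose fibres have constant dimension given by Riemann--Roch for $\Ext^1$. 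Since $n_2 \leq n-1 \leq 3$, del Ba\~no's theorem places $[M(n_2,d_2)]$ in $\RC$; the induction hypothesis places $[\frM_{\tau'}(n_1,d_1)]$ in $\RC$; and $\RC$ is closed under products and projective bundles. Hence crossing any wall preserves membership in $\RC$.

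It remains to anchor the induction at a single non-critical $\tau_0$. I would take $\tau_0$ just above the largest critical value: in that chamber, a $\tau_0$-stable pair $(E,\phi)$ is described by an extension $0 \to \cO_C \to E \to F \to 0$ with $F$ of rank $n-1$ and degree $d$, so $\frM_{\tau_0}(n,d)$ fibres by $\PP(\Ext^1(F,\cO_C))$ over the moduli of such quotients $F$, stratified by Harder--Narasimhan type. Each stratum is a bundle over a product of lower-rank bundle moduli (all of rank $\leq n-1 \leq 3$) and symmetric products of $C$, hence lies in $\RC$ by del Ba\~no and by closure of $\RC$ under products and projectivisations. Combining this base case with the wall-crossing argument gives $[\frM_\tau(n,d)] \in \RC$ for every non-critical $\tau$.

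The principal obstacle is the fine geometric analysis underlying both the flip loci and the extremal moduli $\frM_{\tau_0}(n,d)$: one has to verify that the relevant $\Ext^1$-spaces have constant dimension on each stratum (so that the fibrations really are projective bundles, not merely families whose ranks jump), and has to handle strictly semistable and unstable pieces on the base by a careful Jordan--H\"older stratification that stays inside $\RC$. The restriction $n \leq 4$ enters at precisely one point, namely del Ba\~no's motivic computation of $M(n_2,d_2)$ for $n_2 \leq 3$; extending beyond rank $4$ would require corresponding motivic information for higher-rank bundle moduli.
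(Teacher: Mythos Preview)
Your overall strategy --- induction on rank, wall-crossing in $K(\Mot)$, anchoring at an extreme chamber --- is exactly the paper's. But your execution has two genuine gaps.

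First, your description of the flip loci is too coarse. You assert that $\Smasmenos$ is a projective bundle over a disjoint union of products $\frM_{\tau'}(n_1,d_1) \times M(n_2,d_2)$. This is only the open stratum. When $\gcd(n_2,d_2)>1$, the bundle $E_2$ may be strictly polystable, say $E_2 \cong \bigoplus_i E_i^{a_i}$ with the $E_i$ stable and pairwise non-isomorphic; moreover the Jordan--H\"older filtration of the $\s_c$-semistable triple may have several steps. The paper stratifies $\Smasmenos$ by combinatorial types $\bn$ encoding all this data, and each stratum $X^\pm(\bn)$ is the quotient of an iterated fibration $\tilde X^\pm(\bn)$ by a finite permutation group $\frS_\bn$. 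The crucial difficulty, which you do not mention, is that $\tilde X^\pm(\bn)$ is only \emph{quasi-projective}, and for quasi-projective $X$ there is no general mechanism by which $[X]\in\RC$ forces $[X/\frS_\bn]\in\RC$ (the sub-motive argument \eqref{quotient motive} requires smooth projectivity). Most of the paper (Sections~4--6) is a case-by-case verification that each $[X^\pm(\bn)]\in\RC$, and this is precisely where the hypothesis $n\leq 4$ enters: the list of possible $\bn$ is short enough to handle by hand (Proposition~\ref{prop:ahora}).

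Second, you misidentify the bottleneck. Del Ba\~no's theorem gives $[M^s(n,d)]\in\RC$ for \emph{all} coprime $(n,d)$, not just $n\leq 3$; the paper even recovers this as a by-product (Theorem~\ref{thm:ahora}). The restriction $n\leq 4$ has nothing to do with the bundle moduli and everything to do with the finite-group quotients above, together with the non-coprime strata --- where no universal bundle exists, so Zariski local triviality of the relevant projective fibrations is not automatic and requires a separate Brauer-group argument (Proposition~\ref{prop:non-coprime2}). Your proposal does not engage with either of these points, and your concluding paragraph attributes the rank bound to the wrong ingredient.
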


To find $[\frM_\tau(n,d)] \in K(\Mot)$, we study the behaviour of
the moduli spaces when we change the value of the parameter $\tau$.
When $\tau$ moves without crossing a critical value, the moduli
space remains unchanged. When $\tau$ crosses a critical value,
$\frM_\tau(n,d)$ undergoes a birational
transformation which is called a \emph{flip}. This consists of
removing some subvariety and inserting a different one. A stratification of the flip locus was obtained in \cite{Mu-HS}, and this allows us
to give an explicit geometrical description of flip loci when $n\leq 4$,
proving in particular that
these strata are in $\RC$.

The techniques of the current paper can be extended to deal with
higher ranks $n=5,6$. For arbitrary rank, it is expected that Theorem
\ref{thm:motivated} holds (and hence Theorem \ref{thm:main-HC} as well),
but the proof of this probably will require a more indirect route than
the one undertaken here. Our approach has the value of sticking to
geometry for the analysis of the moduli spaces of pairs.


\section{Motives and the Hodge Conjecture}\label{sec:HC}

\subsection{Grothendieck ring of varieties} \label{subsec:K(var)}

Let $\VarC$ be the category of quasi-projective complex varieties.
We denote by $K (\VarC)$ the Grothendieck ring of
$\VarC$. This is the abelian group generated by elements $[Z]$, for
$Z \in \VarC$, subject to the relation $[Z]=[Z_1]+[Z_2]$ whenever $Z$
can be decomposed as a disjoint union $Z=Z_1\sqcup Z_2$ of a closed and
a Zariski open subset. There is a
naturally defined product in $K (\VarC)$ given by $[Y]\cdot
[Z]=[Y\x Z]$. Note that if $\pi:Z\to Y$ is an algebraic fiber
bundle with fiber $F$, which is locally trivial in the Zariski
topology, then $[Z]=[F]\cdot [Y]$.

We write $\LL:=[\AA^1]$, where $\AA^1$ is the affine line, the
\emph{Lefschetz object} in $K (\VarC)$. Clearly $\LL^k=[\AA^k]$. We
shall consider the localization $K (\VarC)[\LL^{-1}]$,
and its completion
 $$
 \hat{K} (\VarC)= \left\{ \sum_{r\geq 0} [Y_r] \, \LL^{-r} \, ; \,
 \dim Y_r -r \to -\infty \right\}.
 $$

\medskip

The ring $K (\VarC)$ has operations $\lambda^n([Y])=[\Sym^n Y]$,
extended by linearity, and satisfying the relation
$\lambda^n(a+b)=\sum_{i+j=n} \lambda^i(a)\lambda^j (b)$. 
Totaro's lemma (which appears in \cite[Lemma 4.4]{Gooo}) says that
 $$
 [\Sym^k (Y\x \AA^l)]= [\Sym^k(Y)]\x [\AA^{kl}].
 $$
Therefore the $\lambda^n$-operations can be extended to $\hat{K} (\VarC)$.

Let $C$ be a smooth projective complex curve. We define the subring
$\wRC\subset \hat{K} (\VarC)$ as the smallest set such that:
\begin{enumerate}
 \item $[C]\in \wRC$;
 \item if $a,b\in \wRC$, then $a\cdot b\in \wRC$;
 \item if $a\in \wRC$, then $\LL^k \cdot a\in \wRC$ for all $k\in \ZZ$;
 \item $\wRC$ is complete: if $a_k \in \wRC$, $\dim a_k\to -\infty$, then $\sum_{k\geq 0}a_k\in \wRC$;
 \item if $a\in \wRC$, then $\lambda^n(a)\in \wRC$ for all $n\geq 0$.
\end{enumerate}
Note that
\begin{equation}\label{eqn:wRC}
[\Jac C] \in \wRC.
\end{equation} This holds because there is a Zariski locally trivial fibration
$\PP^{g-1} \to \Sym^{2g-1}C \to \Jac C$, hence
$[\Jac C]= \lambda^{2g-1}([C]) [\PP^{g-1}]^{-1}$.

\medskip

Finally, let $\SmVarC$ denote the category of \emph{smooth projective} varieties over $\CC$.
We consider the ring $K^{bl}(\SmVarC)$ generated by the smooth projective varieties
subject to the relations $[X]-[Y]= [\Bl_Y(X)]-[E]$, where $Y\subset X$ is a smooth
subvariety, $\Bl_Y(X)$ is the blow-up of $X$ along $Y$, and $E$ is the exceptional divisor.
By \cite[Theorem 3.1]{Bit}, there is an isomorphism
 $$
  K^{bl}(\SmVarC) \cong K (\VarC).
  $$

\subsection{Motives} \label{subsec:motives}

Let us review the definition of Chow motives. A standard reference for the basic theory of classical motives, including the material presented here, is \cite{Scholl}. 
Given a smooth projective variety $X$, let $CH^d(X)$ denote the abelian group of
$\QQ$-cycles on $X$, of codimension $d$, modulo rational equivalence.
If $X,Y\in\SmVarC$, suppose that $X$ is connected and $\dim(X)=d$. The group of correspondences (of degree $0$)
from $X$ to $Y$ is $\mathrm{Corr}(X,Y)=CH^d(X\times Y)$.

For varieties $X,Y,Z\in\SmVarC$, the composition of correspondences
 $$
 \mathrm{Corr}(X,Y)\otimes\mathrm{Corr}(Y,Z)\to\mathrm{Corr}(X,Z)
 $$
is defined as
 $$
 g\circ f=p_{XZ*}(p_{XY}^*(f)\cdot p_{YZ}^*(g)),
 $$
where $p_{XZ}:X\times Y\times Z \to X\times Z$ is the projection, and similarly for $p_{XY}$ and $p_{YZ}$.

\begin{definition}
The category of \emph{(effective Chow) motives} is the category $\Mot$ such that:
\begin{itemize}
\item its objects are pairs $(X,p)$ where $X\in\SmVarC$, and
$p\in\mathrm{Corr}(X,X)$ is an idempotent ($p=p\circ p$);
\item if $(X,p),(Y,q)$ are effective motives, then the morphisms are
    $\Hom((X,p),(Y,q))=q \circ\mathrm{Corr}(X,Y)\circ p$.
\end{itemize}
\end{definition}

There is a natural functor
 \begin{equation}\label{eq:functor smvar to motives}
 h:\SmVarC^\mathrm{opp}\to\Mot
 \end{equation}
such that, for a smooth projective variety $X$,
 $$
 h(X)=(X,\Delta_X),
 $$
where $\Delta_X\in\mathrm{Corr}(X,X)$ is the graph of the identity $id_X:X\to X$.
We say that $h(X)$ is the \emph{motive of $X$}.

The category $\Mot$ is pseudo-abelian, where direct sums and tensor products are defined as follows:
  \begin{eqnarray*}
  (X,p)\oplus(Y,q) &=&(X\sqcup Y,p+q), \\
  (X,p)\otimes(Y,q)&=&(X\times Y,p_{X\times X}^*(p)\cdot p_{Y\times Y}^*(q)).
  \end{eqnarray*}
In particular
  \begin{eqnarray*}
 h(X\sqcup Y)&=&h(X)\oplus h(Y),\\
 h(X\times Y)&=&h(X)\otimes h(Y).
  \end{eqnarray*}
This allows us to define $K (\Mot)$ as the abelian group generated
by elements $[M]$, for $M\in\Mot$, subject to the relations
$[M]=[M_1]+[M_2]$, when $M=M_1\oplus M_2$. This is a ring with the product $[M_1]\cdot[M_2]=[M_1\otimes M_2]$.

\medskip

In $\Mot$, we have that $\un=h(pt)$ is the identity of the tensor product,
so it is called the \emph{unit motive}. It is easily seen that there is an isomorphism
$\un=(\PP^1,\PP^1\times pt)$. Set $\LL=(\PP^1,pt\times\PP^1)$, which is called the \emph{Lefschetz motive}
(the reason for using the same notation as for the Lefschetz object will be clear in a moment).
Therefore $h(\PP^1)=\un\oplus\LL$, and more generally,
 $$
  h(\mathbb{P}^n)=\un\oplus\LL\oplus \cdots\oplus\LL^n.
  $$

Denote also by $\LL\in K (\Mot)$ the class of the Lefschetz motive $\LL\in\Mot$.
We formally invert $\LL\in K (\Mot)$, and then consider the
completion of $K (\Mot)[\LL^{-1}]$, namely
 $$
 \hat{K} (\Mot)=\left\{\sum_{r\geq 0} [M_r] \, \LL^{-r} \, ; \,\dim M_r-r\to -\infty\right\}.
 $$


In \cite{manin} it was shown that the motive of the blow-up of a smooth projective variety $X$ along a codimension $r$ smooth subvariety $Y$ is $h(\Bl_Y(X))=h(X)\oplus\left(\bigoplus_{i=1}^{r-1}(h(Y)\otimes\LL^i)\right)$, being thus compatible with the relation defining $K^{bl}(\SmVarC)$. So the map $h$ in \eqref{eq:functor smvar to motives} descends to
$K^{bl}(\SmVarC)\to K (\Mot)$, hence defining a ring homomorphism
 \begin{equation} \label{eqn:chi}
 \chi:\hat K (\VarC)\to \hat K (\Mot).
 \end{equation}
When $X$ is smooth and projective, we have
 $$
 \chi([X])=[h(X)],
 $$
so we can think of the map $\chi$ as the natural extension of the notion of motives
to all quasi-projective varieties.
Notice that $\chi(\LL)=\LL$, which justifies the use of the same notation for the
Lefschetz object and the Lefschetz motive.

\medskip

Let $X$ be a smooth projective variety, and $F$ a finite group acting on $X$. Then, from Proposition 1.2 of \cite{B-NA}, we have
the equation
 $$
 h(X/F)=\Big( X, \frac{1}{|F|} \sum_{g\in F} \Gamma_g \Big),
 $$
where $\Gamma_g$ is the graph of $g\in F$.
In particular, $h(X/F)$ is an effective sub-motive of $h(X)$, that is
\begin{equation}\label{quotient motive}
h(X)=h(X/F)\oplus N
\end{equation} for an effective motive $N$.

From \cite{DelBano}, there are operations $\lambda^i(M)$ in $K(\Mot)$ such that
for $X$ smooth projective, we have $\lambda^i([h(X)])=[h(\Sym^i X)]$.
These operations satisfy the relation
$\lambda^n(a+b)=\sum_{i+j=n} \lambda^i(a)\lambda^j (b)$. Note that $\lambda^k(\LL^r)=\LL^{rk}$.
The map $\chi$ in (\ref{eqn:chi}) therefore commutes with the corresponding $\lambda$-operations on $K(\VarC)$ and on $K(\Mot)$.

\medskip

Let $C$ be a smooth projective curve. The motive of $C$ decomposes as
 $$
 h(C)=\un\oplus h^1(C)\oplus \LL.
 $$
It can be seen that
 $$
 h(\Jac C)=\sum_{i=0}^{2g} h^i(\Jac C),
 $$
where $[h^i(\Jac C)]=\lambda^i ([h^1(C)])$, $0\leq i \leq g$.
Finally, define
\begin{equation}\label{eqn:RC}
 \RC=\chi(\wRC)\subset \hat{K}(\Mot).
\end{equation}

\begin{definition}
We say that $X\in \VarC$ is \emph{motivated by $C$} if $\chi([X])\in \RC$.
\end{definition}

Henceforth we shall use the notation $[X]\in \hat{K}(\Mot)$ for $\chi([X])$.

\subsection{Hodge Conjecture} \label{subsec:HC}

Let $Z$ be a smooth projective variety. The Hodge structure $H^k(Z)$ has two natural
filtrations by Hodge sub-structures:
 \begin{itemize}
 \item The level filtration: $\cF^p H^k(Z)$ is the maximal Hodge sub-structure contained
 in $F^p H^k(Z)=\bigoplus_{p'\geq p\, , p'+q'=k} H^{p',q'}(Z)$.
 In particular $\cF^k H^{2k}(Z)= H^{k,k}(Z)\cap H^{2k}(Z,\QQ)$.
 \item The coniveau filtration:
  $$
  \cN^p H^k(Z)=\sum_{\codim S\geq p} \ker\left( H^k(Z)\to
  H^k(Z-S)\right)\, ,
  $$
  where $S$ runs over all Zariski closed subsets of codimension at
  least $p$. Note that $\cN^kH^{2k}(Z)$ is generated by the
  fundamental classes of algebraic cycles of codimension $k$ in
  $Z$.
 \end{itemize}
Clearly $\cN^pH^k(Z)\subset \cF^pH^k(Z)$. The (generalized) Hodge
Conjecture is the assertion $\cN^pH^k(Z) =\cF^pH^k(Z)$.
The usual Hodge Conjecture on $(k,k)$-cycles is satisfied if
$\cN^kH^{2k}(Z)= H^{k,k}(Z)\cap H^{2k}(Z,\QQ)$.

\medskip

Let $M=(X,p)$ be an effective motive. Then the cohomology of $M$ is
 \begin{equation*}
 H^k(M)=\im (p^*:H^k(X) \to H^k(X)) \, ,
 \end{equation*}
for each integer $k$. This produces a map $K(\Mot) \to K(\hs)$, $M \mapsto \sum H^k(M)$.

Let $\fhs$ be the category of filtered Hodge structures.
For $X$ smooth projective, the level
filtration $\cF^pH^k(X)$ gives an element $\cF(X)$ in $\fhs$ (see \cite{AK}).
Therefore, for $M=(X,p)$, we have an element
 $$\cF(M)=\sum \cF^\bullet H^k(M)\in K(\fhs).$$
Hence, there is a map
 $$
  \cF:K(\Mot) \to K(\fhs) \, .
 $$
Also, the coniveau filtration produces a map
 $$
  \cN: K(\Mot) \to K(\fhs)\, .
  $$
Let
 $$
 \Theta=\cF-\cN:K(\Mot) \to K(\fhs)\, .
 $$
An important remark is that for $M$ an effective motive,
$\Theta(M)$ is an effective Hodge structure.
We have the following two important properties:
  \begin{itemize}
  \item for any \emph{smooth projective} variety $X$, $\Theta (X)=0$ if and only if the
  Hodge Conjecture holds for $X$.
  \item $\Theta(\LL\cdot x)=\KK\cdot \Theta(x)$, where $\KK$ is the Hodge structure of $\AA^1$ (the one-dimensional
  Hodge structure with weight $(1,1)$).
  \end{itemize}
Unfortunately, the map $\Theta$ is not known to be a multiplicative
map. However, the
important feature is that the kernel of $\Theta$ is an abelian subgroup
stable by multiplication by $\LL$ and $\LL^{-1}$, which allows us to
consider the map on the completions
\begin{equation}\label{eqn:Theta}
 \Theta: \hat K(\Mot) \to \hat K(\fhs)\, .
\end{equation}

\begin{lemma}\label{lem:HC sum}
If $M$ and $N$ are effective motives, then
 $$
 \Theta(M\oplus N)=0 \Longleftrightarrow \Theta(M)=\Theta(N)=0\, .
 $$
\end{lemma}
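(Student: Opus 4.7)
The plan is short: one direction is formal, and the other reduces to a positivity argument in the Grothendieck group.

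First I would verify that $\Theta$ is additive on direct sums of motives. For $M=(X,p)$ and $N=(Y,q)$ the cohomology $H^k(M\oplus N)$ splits canonically as $H^k(M)\oplus H^k(N)$, and both the level filtration $\cF^\bullet$ and the coniveau filtration $\cN^\bullet$ respect this splitting (they are sub-Hodge structures cut out functorially from the cohomology). Hence
\[
\cF(M\oplus N)=\cF(M)+\cF(N),\qquad \cN(M\oplus N)=\cN(M)+\cN(N),
\]
so $\Theta(M\oplus N)=\Theta(M)+\Theta(N)$ in $K(\fhs)$. This immediately gives the implication "$\Leftarrow$".

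For the converse, I would invoke the effectivity remark stated just before the lemma: since $\cN^pH^k\subset \cF^pH^k$, for an effective motive $M$ the class $\Theta(M)$ is represented by a genuine object of $\fhs$ (the quotients $\cF^p/\cN^p$ of the cohomology of $M$), and similarly for $N$. So I have two classes of actual objects $H_M,H_N\in\fhs$ whose sum is zero in $K(\fhs)$, equivalently $[H_M\oplus H_N]=0$.

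The final step is the positivity argument: in the Grothendieck group of an abelian category, the class of an object $H$ vanishes if and only if $H=0$, because any additive invariant non-negative on objects (e.g.\ the sum of the dimensions of the graded pieces of $\cF^\bullet$, or the total Hodge numbers) descends to $K(\fhs)$ and is zero on $H$ precisely when $H=0$. Thus $[H_M\oplus H_N]=0$ forces $H_M=H_N=0$ in $\fhs$, hence $\Theta(M)=\Theta(N)=0$. I do not expect any real obstacle here; the only point that needs a line of justification is the positivity/cancellation of effective classes in $K(\fhs)$, and this follows from the abelian-category structure together with the remark preceding the lemma.
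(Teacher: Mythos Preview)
Your proposal is correct and follows essentially the same approach as the paper: the backward implication is immediate from additivity, and the forward implication uses the remark that $\Theta$ takes effective motives to effective filtered Hodge structures, together with the fact that two effective classes summing to zero in $K(\fhs)$ must each vanish. The paper states this in two lines, while you spell out the positivity argument via dimension; note only that your clause ``in the Grothendieck group of an abelian category, the class of an object $H$ vanishes if and only if $H=0$'' is false for general abelian categories, but your actual justification via non-negative additive invariants on $\fhs$ is fine.
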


\begin{proof}
 In the left direction, it is obvious. In the right direction, just note
 that $\Theta$ sends effective motives to effective filtered Hodge structures.
\end{proof}

For $X\in \VarC$, we write $\Theta(X):=\Theta(h(X))$. 

The following result is well-known (cf. \cite{A}). For convenience of the reader, we include a proof. 
The genericity of the curve appearing in the proposition means that it belongs to the complement of a 
countable union of some closed sets of the moduli of curves (see \cite[Section 1]{BP} or \cite[Proposition 6.5]{A}). 
\begin{proposition} \label{prop:generic}
  Let $C$ be a generic curve. Then
  \begin{itemize}
  \item $\Theta(C^k)=0$, for any $k\geq 0$.
  \item $\Theta(\Sym^k C)=0$, for any $k\geq 0$.
  \item $\Theta(\Jac C)=0$.
  \item $\Theta(\RC)=0$.
  \end{itemize}
\end{proposition}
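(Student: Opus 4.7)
\emph{Plan of proof.} The first bullet is the genericity hypothesis on $C$ itself: one assumes that $C$ lies in the (dense) countable intersection of those loci in the moduli of curves where each $C^k$ satisfies the classical Hodge Conjecture, so that $\Theta(C^k)=0$ for every $k\geq 1$ (the case $k=0$ is trivial).

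For the second bullet, I write $\Sym^k C = C^k/S_k$ and invoke \eqref{quotient motive} to get an effective decomposition $h(C^k) = h(\Sym^k C)\oplus N$; then Lemma~\ref{lem:HC sum}, applied to the first bullet, yields $\Theta(\Sym^k C)=0$. For the third bullet, I recall that for $d\geq 2g-1$ the Abel--Jacobi map $\Sym^d C \to \Jac C$ is a Zariski-locally trivial $\PP^{d-g}$-bundle, so the motivic projective bundle formula gives
\[
h(\Sym^{2g-1}C) \;=\; \bigoplus_{i=0}^{g-1} h(\Jac C)\otimes \LL^{i},
\]
with each summand effective. Combining $\Theta(\Sym^{2g-1}C)=0$ (from the previous step), Lemma~\ref{lem:HC sum}, and the relation $\Theta(\LL\cdot x)=\KK\cdot\Theta(x)$, one deduces $\Theta(h(\Jac C)\otimes\LL^{i})=0$ for all $i$; the case $i=0$ is $\Theta(\Jac C)=0$.

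The fourth bullet is the genuine obstacle, since $\Theta$ is not known to be multiplicative while $\wRC$ is generated from $[C]$ by products and $\lambda$-operations. My plan is to introduce the auxiliary subset
\[
S \;=\; \Bigl\{\, \textstyle\sum_{i} \LL^{k_i}\cdot[h(X_i)] \in \hat K(\Mot) \,:\, X_i \text{ smooth projective, } h(X_i) \text{ a direct summand of some } h(C^{n_i})\,\Bigr\},
\]
allowing infinite sums satisfying $\dim\to-\infty$. For any such generator, Lemma~\ref{lem:HC sum} applied to $h(C^{n_i}) = h(X_i)\oplus N_i$ gives $\Theta(X_i)=0$, hence $\Theta(\LL^{k_i}[h(X_i)])=\KK^{k_i}\cdot\Theta(X_i)=0$; thus $S\subseteq \ker\Theta$.

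It then suffices to show $\RC\subseteq S$, which I will do by induction on the five operations defining $\wRC$. One has $[C]\in S$ trivially; closure under multiplication by $\LL^{\pm k}$ and under completion is built into the definition of $S$. For products: if $h(X)$ is a summand of $h(C^n)$ and $h(Y)$ of $h(C^m)$, then $h(X\times Y)=h(X)\otimes h(Y)$ is a summand of $h(C^{n+m})$. For $\lambda^n$: the identity $\lambda^n(a+b)=\sum_{i+j=n}\lambda^i(a)\lambda^j(b)$ reduces the computation to single generators $\LL^k[h(X)]$; Totaro's lemma together with the compatibility of $\chi$ with the $\lambda$-operations yields $\lambda^n(\LL^k[h(X)])=\LL^{nk}[h(\Sym^n X)]$, and $h(\Sym^n X)=h(X^n/S_n)$ is a direct summand of $h(X^n)=h(X)^{\otimes n}$, which in turn is a summand of $h(C^{n n_i})$. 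This closes the induction and establishes $\Theta(\RC)=0$.
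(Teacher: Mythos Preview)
Your overall strategy matches the paper's, but there are two differences in approach and one small gap.

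\textbf{Bullets 1 and 3.} You take $\Theta(C^k)=0$ as the \emph{definition} of genericity, consistent with the Introduction's phrasing. The paper instead \emph{proves} this: for very general $C$ the Hodge group of $A=\Jac C$ equals the full symplectic group $\Sp(2g)$ (by a degeneration argument), hence coincides with the Lefschetz group, so every Hodge class on $A^k$ is generated by divisors and $\Theta((\Jac C)^k)=0$ for all $k$. Since $h^1(C)$ is a submotive of $h(\Jac C)$, Lemma~\ref{lem:HC sum} then gives $\Theta(C^k)=0$. In particular the paper gets $\Theta(\Jac C)=0$ immediately from this, whereas your route via the projective-bundle decomposition of $\Sym^{2g-1}C$ is correct but circuitous.

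\textbf{Bullet 4.} Your auxiliary set $S$ is the right idea and is essentially how the paper argues, but as you define it $S$ is \emph{not} closed under $\lambda^n$. The problem is the requirement that each generator be $h(X_i)$ with $X_i$ \emph{smooth} projective: if $\dim X\geq 2$ then $\Sym^n X$ is generally singular (already $\Sym^2(C^2)=C^4/\frS_2$, with the involution swapping the two pairs, has fixed locus of codimension $2$), so your induction step does not produce an element of $S$. The fix is immediate: either define $S$ using arbitrary effective motives $M_i$ that are direct summands of some $h(C^{n_i})$ (Lemma~\ref{lem:HC sum} applies to effective motives, not just to motives of smooth varieties, and $\lambda^n(M)$ is an effective summand of $M^{\otimes n}$), or---as the paper does---observe directly that every element of $\wRC$ is a completed $\ZZ[\LL^{\pm1}]$-combination of classes $[C^k/F]$ with $F\subset\frS_k$ a permutation subgroup, since this collection is visibly closed under products and $\lambda^n$ (via wreath products). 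For each such quotient, \eqref{quotient motive} exhibits $h(C^k/F)$ as an effective summand of $h(C^k)$, and Lemma~\ref{lem:HC sum} finishes.
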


\begin{proof}
  Let $A=\Jac C$, which is a polarised abelian variety. The Hodge
  group of $A$, $\Hg(A)$, is defined as the group of all linear
  automorphisms of $V=H^1(A,\QQ)$ which leave invariant all Hodge
  cycles of the varieties $A\x \cdots \x A$. Let $E$ denote the
  polarisation of $A$. Then $\Hg(A)=\Sp(V,E)$. This can be proved
  with the arguments of \cite{BP}: the Hodge group of $\Jac C$ for
  general $C$ contains the Hodge group of any degeneration of $C$,
  in particular, degenerating $C$ to a reducible nodal curve
  consisting of the union of two curves of genus $a$ and $g-a$, we
  see that $\Sp(2g)\supset \Hg(A)\supset \Sp(2a)\x \Sp(2g-2a)$. As
  the general Jacobian has Neron-Severi group equal to $\ZZ$, the
  argument in \cite[Theorem 5]{BP} proves that $\Hg(A)=\Sp(2g)$.

By \cite[Proposition 17.3.4]{BL}, $\End_\QQ(A)=\End(V)^{\Hg(A)}=\QQ$.
 Now the Lefschetz group of $A$ is
 $$
 Lf(A)=\{g \in \Sp(V,E) \ ;\  g\circ f=f\circ g, \forall f\in  \End_\QQ(A) \}_0
 =\Sp(V,E)=\Hg(A)\, .
 $$
By \cite{Tankeev} or \cite[Exercise 13]{BL},  $H^*_{Hodge}(A^k)$
is generated by divisors, so the Hodge Conjecture holds for $A^k$, for all
$k\geq 1$. This is written, in our terminology, as $\Theta((\Jac C)^k)=0$.

Now $h^1(C)$ is a submotive of $h(\Jac C)$, so Lemma \ref{lem:HC sum} implies that
$\Theta(h^1(C)^k)=0$, and therefore $\Theta(C^k)=0$ for any $k\geq 1$.
From this, using \eqref{quotient motive} and Lemma \ref{lem:HC sum}, we get $\Theta(\Sym^k C)=0$, for any $k\geq 1$.
Finally, $\RC$ is generated by varieties obtained by
taking iterated products and symmetric products of the
curve $C$. Such a manifold is isomorphic to a quotient $C^k/F$, where $k\geq 1$ and
$F\subset \frS_k$ is a subgroup of
the permutation group of the factors of $C^k$. Using \eqref{quotient motive} and
Lemma \ref{lem:HC sum} again, it follows that $\Theta(C^k/F)=0$.
\end{proof}

\section{Moduli spaces of pairs and of triples} \label{sec:triples}

\subsection{Moduli spaces of pairs}

Let $C$ be a smooth projective curve of genus $g\geq 3$ over $\CC$.
We denote by $M(n,d)$ the moduli space of polystable bundles of rank
 $n$ and degree $d$ over $C$. The open subset consisting of stable
 bundles will be denoted $M^s(n,d) \subset M(n,d)$. Note that
 $M(n,d)$ is a projective variety, which is in general not smooth if
 $n$ and $d$ are not coprime. On the other hand, $M^s(n,d)$ is a
 smooth quasi-projective variety. If $L_0$ is a fixed line bundle of
 degree $d$, then we have the moduli spaces $M^s(n,L_0)$ and $M(n,L_0)$
 consisting of stable and polystable bundles $E$, respectively, with
 determinant $\det(E) = L_0$.

A pair $(E,\phi)$ over $C$ consists of a vector bundle $E$ of rank
$n$ and degree $d$, and $\phi\in H^0(E)$. Let $\tau\in \RR$. We say that
$(E,\phi)$ is $\tau$-stable (see \cite[Definition 4.7]{GP}) if:
 \begin{itemize}
 \item For any subbundle $E'\subset E$, we have $\mu(E')<\tau$.
 \item For any subbundle $E'\subset E$ with $\phi\in H^0(E')$, we
 have $\mu(E/E')>\tau$.
 \end{itemize}
The concept of $\tau$-semistability is defined by replacing the
strict inequalities by weak inequalities. A pair $(E,\phi)$ is
$\tau$-polystable if $E=E'\oplus E''$, where $\phi\in H^0(E')$, $(E',\phi')$
is $\tau$-stable, and
$E''$ is a polystable bundle of slope $\tau$. The moduli space of
$\tau$-polystable pairs is denoted by $\frM_\tau ( n, d)$.
It is a projective variety and contains a smooth open subset
$\frM_\tau^s(n,d)\subset
 \frM_\tau(n,d)$ consisting of $\tau$-stable pairs.

 If we fix the
 determinant $\det(E)= L_0$, then we have the moduli spaces of
 pairs with fixed determinant, $\frM_\tau^s(n,L_0)$ and
 $\frM_\tau(n,L_0)$. Pairs are discussed at length in
 \cite{B,BD,GP,Mu1,Mu2,Mu-HS,MOV1}.

 The range of the parameter $\tau$ is an open interval $I$ split by a
 finite number of critical values $\tau_c$. For a non-critical value
 $\tau\in I$, there are no properly polystable pairs, so $\frM_\tau
 (n,d)=\frM_\tau^s(n,d)$ is smooth and projective. For a critical
 value $\tau=\tau_c$, $\frM_\tau(n,d)$ is in general singular at
 properly $\tau$-polystable points.

\subsection{Moduli spaces of triples}

As it has been done in other articles \cite{Mu1,Mu2,Mu-HS,MOV1}, it is convenient
to rephrase questions about pairs into a more general object known as a triple \cite{BGP,BGPG}.

A triple $T = (E_{1},E_{2},\phi)$ on $C$ consists of two vector
bundles $E_{1}$ and $E_{2}$ over $C$, of ranks $n_1$ and $n_2$ and
degrees $d_1$ and $d_2$, respectively, and a homomorphism $\phi
\colon E_{2} \to E_{1}$. We shall refer to $(n_1,n_2,d_1,d_2)$ as
the {type} of the triple. 
For any $\s \in \RR$, the $\s$-slope of $T$ is defined by
 $$
   \mu_{\s}(T)  =
   \frac{d_1+d_2}{n_1+n_2} + \s \frac{n_{2}}{n_{1}+n_{2}}\ .
 $$
We say that a triple $T = (E_{1},E_{2},\phi)$ is $\s$-stable if
$\mu_{\s}(T') < \mu_{\s}(T)$ for any proper subtriple $T' =
(E_{1}',E_{2}',\phi')$. We define $\s$-semistability by replacing
the above strict inequality with a weak inequality. A triple $T$ is
$\s$-polystable if it is the direct sum of $\s$-stable triples of
the same $\s$-slope. We denote by
  $$
  \cN_\s(n_1,n_2,d_1,d_2)
  $$
the moduli space of $\s$-polystable triples of type
$(n_1,n_2,d_1,d_2)$. This moduli space was constructed in \cite{BGP}
and \cite{Sch}. It is a complex projective variety. The open subset
of $\s$-stable triples will be denoted by
$\cN_\s^s(n_1,n_2,d_1,d_2)$.

Let $L_1,L_2$ be two bundles of degrees $d_1,d_2$ respectively. Then
the moduli spaces of $\sigma$-polystable triples $T=(E_1,E_2,\phi)$
with $\det(E_1)=L_1$ and $\det(E_2)=L_2$ will be denoted by
  $$
  \cN_\s(n_1,n_2,L_1,L_2)\, ,
  $$
and $\cN_\s^s(n_1,n_2,L_1,L_2)$ is the open subset of
$\s$-stable triples.

\medskip

For the case $(n_1,n_2)=(n,1)$, we recover the notion of a pair.
Given a pair $(E,\phi)$, we interpret
$\phi\in H^0(E)$ as a morphism $\phi:\cO \to E$, where
$\cO$ is the trivial line bundle on $X$. So we have an identification
$(E,\phi)\mapsto (E,\cO,\phi)$ from pairs to triples. The
$\tau$-stability of $(E,\phi)$ corresponds to the $\s$-stability of
$(E,\cO,\phi)$, where $\s=(n+1)\tau -d$ (see \cite{BGP}). Therefore
we have an isomorphism of moduli spaces
   \begin{equation}\label{eqn:isom}
    \cN_\sigma (n,1,d,0)  \cong \frM_\tau (n,d) \x \Jac C\, ,
   \end{equation}
given by $(E,L,\phi)\mapsto ((E\ox L^*,\phi),L)$.
In the case of fixed determinant, we have
 $$
 \cN_\sigma (n,1,L_0,\cO)  \cong \frM_\tau (n,L_0).
 $$
We shall take the point of view of triples for studying $\frM_\tau(n,d)$, because
triples are
more adapted to using homological algebra (extensions, filtrations, etc).
Henceforth, we shall write
 $$
 \cN_\s:=\cN_\s(n,1,d,d_o).
 $$

\subsection{Critical values and flip loci}

Let $\mu(E)=\deg(E)/\rk(E)$ denote the slope of a bundle $E$, and
let $\mu_i=\mu(E_i)=d_i/n_i$, for $i=1,2$. Write
  \begin{align*}
  \s_m = &\, \mu_1-\mu_2\ ,  \\
  \s_M = & \left\{ \begin{array}{ll}
    \left(1+ \frac{n_1+n_2}{|n_1 - n_2|}\right)(\mu_1 - \mu_2)\ ,
      \qquad & \mbox{if $n_1\neq n_2$\ ,} \\ \infty, & \mbox{if $n_1=n_2$\
      ,}
      \end{array} \right.
  \end{align*}
and let $I$ be the interval $I=(\s_m,\s_M)$. Then a necessary
condition for $\cN_\s^s(n_1,n_2,d_1,d_2)$ to be non-empty is that
$\s\in I$ (see \cite{BGPG}).
To study the
dependence of the moduli spaces on the parameter $\s$, we need
the concept of critical value \cite{BGP,MOV1}.

\begin{definition}\label{def:critical}
The values $\s_c\in I$ for which there exist $0 \le n'_1 \leq
n_1$, $0 \le n'_2 \leq n_2$, $d'_1$ and $d'_2$, with $n_1'n_2\neq
n_1n_2'$, such that
$$
 \s_c=\frac{(n_1+n_2)(d_1'+d_2')-(n_1'+n_2')(d_1+d_2)}{n_1'n_2-n_1n_2'},
$$
are called \emph{critical values}. We also consider $\s_m$ and $\s_M$
(when $\s_M\neq \infty$) as critical values.
\end{definition}

\begin{theorem}[\cite{BGPG}]\label{thm:pairs}
 For non-critical values $\s\in I$, $\cN_{\s}$ is smooth and
 projective, and it consists only of $\s$-stable points (i.e.
 $\cN_{\s}=\cN_{\s}^s$). For critical values $\s=\s_c$,
 $\cN_{\s}$ is projective, and the open subset
 $\cN_{\s}^s\subset \cN_{\s}$ is smooth. In both cases, the dimension of
 $\cN_{\s}$ is $(n^2-n+1)(g-1)+1+d-n \, d_o$.
\end{theorem}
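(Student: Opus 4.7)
The plan is to follow the standard framework for moduli of triples as developed in \cite{BGP, BGPG}. The proof assembles four ingredients: (a) equivalence of $\s$-stability and $\s$-semistability at non-critical $\s$, (b) projectivity from the GIT construction, (c) smoothness at $\s$-stable points via deformation theory of triples, and (d) the dimension count via Riemann--Roch on the deformation complex.

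First I would show that non-critical $\s$ forces $\cN_\s = \cN_\s^s$. A properly $\s$-semistable but non-stable triple $T$ admits a proper subtriple $T'$ of some type $(n_1',n_2',d_1',d_2')$ with $\mu_\s(T') = \mu_\s(T)$; solving this identity for $\s$ reproduces exactly the expression in Definition \ref{def:critical}, so $\s$ would have to be critical. Hence every $\s$-semistable triple is $\s$-stable, and $\cN_\s = \cN_\s^s$. For projectivity (valid in both critical and non-critical cases) I would invoke the GIT construction of \cite{BGP, Sch}, which realizes $\cN_\s$ as a GIT quotient of a closed subscheme of a suitable product of Quot schemes, hence a projective complex variety.

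For smoothness at $\s$-stable points, the infinitesimal deformation theory of a triple $T = (E_1,E_2,\phi)$ is controlled by the hypercohomology of the two-term complex
$$
C^\bullet(T,T):\ \End(E_1)\oplus\End(E_2)\ \xrightarrow{(\a,\b)\mapsto \phi\b - \a\phi}\ \Hom(E_2,E_1),
$$
placed in degrees $0$ and $1$. The Zariski tangent space at $T$ is $\HH^1(C^\bullet(T,T))$ and the obstructions lie in $\HH^2(C^\bullet(T,T))$. A $\s$-stable triple is simple, so $\HH^0(C^\bullet(T,T)) = \CC$. The key non-formal input is the vanishing $\HH^2(C^\bullet(T,T)) = 0$ at every $\s$-stable $T$: by Serre duality for the complex $C^\bullet$, this amounts to the absence of non-zero triple maps of the form $T \to T\otimes K_C$ in a shifted sense, which is ruled out using $\s$-stability together with the rank-one condition $n_2 = 1$. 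Granted this vanishing, $\cN_\s^s$ is smooth at $T$. A Riemann--Roch computation on $C^\bullet(T,T)$ in our case $(n_1,n_2,d_1,d_2) = (n,1,d,d_o)$ gives
$$
\chi(C^\bullet) = \chi(\End(E_1)) + \chi(\End(E_2)) - \chi(\Hom(E_2,E_1)) = -(g-1)(n^2-n+1) - d + n\,d_o,
$$
so $\dim \cN_\s^s = \dim \HH^1 = 1 - \chi(C^\bullet) = (n^2-n+1)(g-1) + 1 + d - n\,d_o$, as stated.

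The principal obstacle is the vanishing $\HH^2(C^\bullet(T,T)) = 0$ at $\s$-stable triples with $n_2 = 1$: this is the step that genuinely uses the interplay between the stability parameter and the rank-one condition on $E_2$, and it is what makes $\cN_\s^s$ smooth of the expected dimension rather than merely having the expected dimension as a virtual count. The other three ingredients (stability-semistability equivalence, GIT projectivity, Euler-characteristic dimension count) are formal once this vanishing is secured.
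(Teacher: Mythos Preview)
The paper does not supply its own proof of this statement; Theorem~\ref{thm:pairs} is quoted verbatim from \cite{BGPG} with no argument given. Your sketch correctly reconstructs the proof as it appears in that reference: the equivalence of stability and semistability away from critical values, projectivity via the GIT construction of \cite{BGP,Sch}, smoothness at stable points from the vanishing of the top hypercohomology of the two-term deformation complex (which, as you note, genuinely uses $n_2=1$), and the Riemann--Roch dimension count all match \cite{BGPG}. There is nothing further to compare against in the present paper.
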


The interval $I$ is split by a finite number of values $\s_c \in I$.
The stability and semistability criteria  for two values of $\s$
lying between two consecutive critical values are equivalent; thus
the corresponding moduli spaces are isomorphic. When $\s$ crosses a
critical value, the moduli space undergoes a transformation which we
call a \emph{flip}. Let $\s_c\in
I$ be a critical value and set $\scp = \s_c + \epsilon$, $\scm = \s_c -
 \epsilon$,
where $\epsilon > 0$ is small enough so that $\s_c$ is the only
critical value in the interval $(\scm,\scp)$.
We define the \textit{flip loci} as
 \begin{align*}
 \cS_{\scp} &= \{ T\in\cN_{\scp}^s \ ;
 \ \text{$T$ is $\scm$-unstable}\} \subset\cN_{\scp}^s \ ,\\
 \cS_{\scm} &= \{ T\in\cN_{\scm}^s \ ;
 \ \text{$T$ is $\scp$-unstable}\}
 \subset\cN_{\scm}^s \ .
 \end{align*}

It follows that (see \cite[Lemma 5.3]{BGPG})
\begin{equation}\label{eqn:flip}
 \cN_{\scp}^s\setminus\cS_{\scp}=\cN_{\s_c}^s=\cN_{\scm}^s\setminus\cS_{\scm}.
\end{equation}

\medskip

When $d/n-d_o> 2g-2$, the moduli space $\cN_{\s}$ for the smallest possible values of the
parameter can be described explicitly. Let $\smp=\s_m+\epsilon$,
$\epsilon>0$ small enough. By \cite[Proposition 4.10]{MOV1}, there
is a morphism
 \begin{equation}\label{eqn:alfa}
 \pi:\cN_{\smp} =\cN_{\smp}(n,1,d,d_o) \to M (n,d) \x \Jac C
 \end{equation}
which sends $T=(E,L,\phi)\mapsto (E,L)$. Let
\begin{equation}\label{eqn:Um}
  \cU_m=\cU_m(n,1,d,d_o)= \pi^{-1}(M^s(n,d)\x \Jac C).
\end{equation}
By \cite[Proposition 4.10]{MOV1},
$\pi:\cU_m \to M^s(n,d) \x \Jac C$ is a projective fibration whose fibers are
the projective spaces $\PP H^0(E\ox L^*)$. We write
\begin{equation}\label{eqn:Dm}
 \cD_m:=\cD_m(n,1,d,d_o)= \cN_{\smp}\setminus \cU_m \, .
\end{equation}

\subsection{The flip locus $\cS_{\s_c^+}$}\label{sec:flipS+}

We start by describing geometrically $\cS_{\s_c^+}$. The
following description is taken from \cite{Mu-HS}.
Let $b\geq 1$ and
$r\geq 1$. Fix $n'\geq 1$ and $d'$ such that
 \begin{equation}\label{eqn:1}
 \frac{d'+d_o+\s_c}{n'+1} = \mu_{\s_c}(T) = : \mu_c \,  .
 \end{equation}
Let $(n_1,d_1),\ldots, (n_b,d_b)$ satisfy
 \begin{equation}\label{eqn:2}
 \frac{d_i}{n_i}=\mu_c\,.
 \end{equation}
Consider $a_{ij}\geq 0$, for $1\leq i\leq b$ and $1\leq j\leq r$,
such that $\ba_j=(a_{1j},\ldots,a_{bj})\neq (0,\ldots,0)$, for all $j$.
We assume that
 \begin{equation}\label{eqn:3}
 \sum_{i,j} a_{ij} n_i + n'= n.
 \end{equation}
Write
$\bn=((n_i), (a_{i1}), \ldots, (a_{ir}))$, which
we call the \emph{type} of the stratum.
Consider
 \begin{equation}\label{eqn:3.5}
 \tilde U(\bn)= \{(E_1,\ldots, E_b)\in M^s(n_1,d_1)\times \cdots \times
 M^s(n_b,d_b) \, ; \, E_i\not\cong E_j, \ \text{for } i\neq j \}\,.
 \end{equation}
For each $(E_1,\ldots, E_b)\in \tilde U(\bn)$, set
$S_i=(E_i,0,0)$, $1\leq i\leq b$.

We define $X^+(\bn)\subset \cS_{\s_c^+}$ as the subset formed by
those triples $T$ admitting a filtration
 $$
0=T_0\subset
T_1\subset T_2\subset \cdots \subset T_{r+1}=T
$$
such that, for some $(E_1,\ldots, E_b)\in \tilde U(\bn)$,
$$
  \bar{T}_j=T_j/T_{j-1} \cong S(\ba_j):= S_1^{a_{1j}} \oplus \cdots  \oplus
  S_b^{a_{bj}}\, ,
$$
is the maximal
$\s_c$-polystable subtriple of $T/T_{j-1}$. Note that $\bar T_{r+1}\in \cN_{\s_c}^s(n',1,d',d_o)$. By \cite[Lemma 4.8]{Mu-HS},
  $$
  \cS_{\s_c^+}=\bigsqcup_{\bn} X^+(\bn)\, .
  $$

\begin{proposition}[{\cite[Proposition 5.1]{Mu-HS}}] \label{prop:Xbn}
  Let $(E_1,\ldots, E_b, \bar T_{r+1})\in \cM(\bn):=\tilde U(\bn) \times \cN_{\s_c}^s(n',1,d',d_o)$.
  Define triples $\bT_j$ by downward recursion as follows:
  $\bT_{r+1}=\bar T_{r+1}$ and for $1\leq j\leq r$
  define $\bT_j$ as an extension
   \begin{equation}\label{eqn:ext}
   0\to S(\ba_j)\to \bT_j \to \bT_{j+1}\to 0\, .
   \end{equation}
  Let $\xi_j\in \Ext^1(\bT_{j+1},S(\ba_j))$ be the extension class corresponding to (\ref{eqn:ext}).
  Write $T:=\bT_1$. Then $T\in X^+(\bn)$ if and only if the
  following conditions are satisfied:
  \begin{enumerate}
  \item The extension class $\xi_j\in \Ext^1(\bT_{j+1},S(\ba_j))= \prod_i
  \Ext^1(\bT_{j+1},S_i)^{a_{ij}}$ lives in \newline
  $\prod_i V(a_{ij},\Ext^1(\bT_{j+1},S_i))$, with the notation \newline
  $V(k,W)=\{(w_1,\ldots, w_k)\in W^k \ ; \ w_1,\ldots, w_k \text{
  are linearly independent}\}$, \newline for $W$ a vector space.
  \item Consider the
   map $S(\ba_{j+1})\to \bT_{j+1}$ and the element $\xi_j'$ which
   is the image of $\xi_j$ under
   $\Ext^1(\bT_{j+1},S(\ba_j)) \to \Ext^1(S(\ba_{j+1}),S(\ba_j))$.
   Then the class $\xi_j'\in \Ext^1(S(\ba_{j+1}),S(\ba_j))= \prod_i
  \Ext^1(S_i,S(\ba_{j}))^{a_{i,j+1}}$ lives in $\prod_i
  V(a_{i,j+1},\Ext^1(S_i,S(\ba_{j})))$.
  \end{enumerate}

  Two extensions $\xi_j$ give rise to isomorphic $\bT_j$
  if and only if the triples $\bT_{j+1}$ are
  isomorphic and the extension classes are the same up to action
  of the group $\GL(\ba_j):=\GL(a_{1j})\times \cdots \times \GL(a_{bj})$.
\end{proposition}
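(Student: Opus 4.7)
The plan is to establish a correspondence (modulo a natural group action) between triples in $X^+(\bn)$ and tuples of extension classes $(\xi_r,\ldots,\xi_1)$ satisfying conditions (1) and (2), and to identify each condition with the precise geometric constraint it encodes.

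First I would observe that for $T \in X^+(\bn)$, the filtration $T_\bullet$ is uniquely determined as the socle filtration of $T$ with respect to $\s_c$-semistability, since the requirement that $\bar T_j$ be the maximal $\s_c$-polystable subtriple of $T/T_{j-1}$ is exactly the recursive definition of the socle. Setting $\bT_j := T/T_{j-1}$, the short exact sequences $0 \to \bar T_j \to \bT_j \to \bT_{j+1} \to 0$ then yield the extension classes $\xi_j$. The only remaining ambiguity is the choice of identification $T_j/T_{j-1} \simeq S(\ba_j)$, and the distinctness condition $E_i \not\cong E_{i'}$ built into $\tilde U(\bn)$ forces $\Aut(S(\ba_j)) = \prod_i \GL(a_{ij}) = \GL(\ba_j)$, which gives the final assertion about isomorphism classes.

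The analytic core is identifying conditions (1) and (2) with the constraints cutting out $X^+(\bn)$. For condition (2) I would argue by downward induction on $j$: assuming $\mathrm{Soc}(\bT_{j+1}) = S(\ba_{j+1})$, the socle of $\bT_j$ strictly exceeds $S(\ba_j)$ iff some non-trivial $\bar R \subset S(\ba_{j+1})$ lifts to $\bT_j$ complementary to $S(\ba_j)$, iff the pullback of $\xi_j$ to $\bar R$ vanishes. Writing $\bar R = \bigoplus_i S_i \otimes V_i$ and working inside $\Ext^1(S(\ba_{j+1}),S(\ba_j)) = \prod_i \Ext^1(S_i, S(\ba_j))^{a_{i,j+1}}$, this obstruction is a linear combination of the components of $\xi_{j,i}'$, and non-vanishing for all choices $(V_i) \neq 0$ is precisely the linear independence of condition (2). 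For condition (1), the long exact sequence for $\Hom(-, S_i)$ applied to $0 \to S(\ba_j) \to \bT_j \to \bT_{j+1} \to 0$ produces a connecting map $\CC^{a_{ij}} = \Hom(S(\ba_j), S_i) \to \Ext^1(\bT_{j+1}, S_i)$ whose entries are exactly the components of $\xi_{j,i}$; injectivity is equivalent to no copy of $S_i \subset S(\ba_j)$ splitting off $\bT_j$ as a direct summand. If such a splitting did occur, pulling back through $T \to \bT_j$ produces a proper subtriple $T^\flat \subset T$ with $n_1(T^\flat) = n_1(T) - n_i$, $n_2(T^\flat) = 1$, and $\mu_{\s_c}(T^\flat) = \mu_c$; the slope expansion $\mu_\s = \mu_c + (\s - \s_c)\, n_2/(n_1 + n_2)$ would then give $\mu_{\s_c^+}(T^\flat) > \mu_{\s_c^+}(T)$, contradicting the $\s_c^+$-stability required by $T \in \cS_{\s_c^+}$.

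The converse --- that conditions (1) and (2) jointly imply $T \in X^+(\bn)$ --- requires reconstructing the socle structure (which follows inductively from condition (2)) and checking $\s_c^+$-stability, since the $\s_c^-$-instability is automatic from $S(\ba_1) \subset T$. The hard part will be $\s_c^+$-stability: the slope analysis shows any destabilizer $T' \subset T$ must have $n_2(T') = 1$, $n_1(T') < n$, and $\mu_{\s_c}(T') = \mu_c$, and the $\s_c$-stability of $\bar T_{r+1}$ forces $T' \to \bar T_{r+1}$ to be surjective. Intersecting with the filtration then produces subtriples $(T' \cap T_j)/(T' \cap T_{j-1}) = \bigoplus_i S_i \otimes W_{ij} \subset S(\ba_j)$, and the compatibility required for $T'$ to embed in $T$ forces the pushouts of the $\xi_{j,i}$ along $\CC^{a_{ij}} \to \CC^{a_{ij}}/W_{ij}$ to vanish. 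By the linear independence in condition (1), this in turn forces $W_{ij} = \CC^{a_{ij}}$ for all $i, j$, yielding $T' = T$ and contradicting the properness of $T'$.
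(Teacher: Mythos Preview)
The paper does not supply its own proof of this proposition: it is quoted verbatim as \cite[Proposition 5.1]{Mu-HS} and used as a black box. So there is no in-paper argument to compare against.

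That said, your proof is essentially correct and follows what one would expect the argument in \cite{Mu-HS} to be. The identification of the filtration as the $\s_c$-socle filtration, the computation $\Aut(S(\ba_j))=\GL(\ba_j)$ from the pairwise non-isomorphism of the $E_i$, the reading of condition~(2) as ``no simple summand of $S(\ba_{j+1})$ lifts into $\bT_j$'' (hence $\mathrm{Soc}(\bT_j)=S(\ba_j)$), and the reading of condition~(1) as ``no copy of $S_i$ splits off $\bT_j$'' (hence no $\s_c^+$-destabilising $T^\flat$ with $T/T^\flat\cong S_i$) are all on target.

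One point in the converse deserves tightening. When you assume conditions (1) and (2) and take a hypothetical $\s_c^+$-destabiliser $T'\subsetneq T$ with $n_2(T')=1$ and $\mu_{\s_c}(T')=\mu_c$, the clean way to run the argument is by \emph{downward induction on $j$}: having shown that the image of $T'$ in $\bT_{j+1}$ is all of $\bT_{j+1}$, the image of $T'$ in $\bT_j$ sits in a sub-extension
\[
0\to \bigoplus_i S_i\otimes W_{ij}\to \bT_j' \to \bT_{j+1}\to 0
\]
of $0\to S(\ba_j)\to\bT_j\to\bT_{j+1}\to 0$; this forces $\xi_j$ to lie in the image of $\Ext^1(\bT_{j+1},\bigoplus_i S_i\otimes W_{ij})$, and linear independence (condition~(1)) then gives $W_{ij}=\CC^{a_{ij}}$, i.e.\ $\bT_j'=\bT_j$. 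Your phrasing (``intersecting with the filtration produces $W_{ij}$'' and then invoking condition~(1) globally) is morally the same, but as written it obscures that the step-$j$ conclusion requires the step-$(j{+}1)$ conclusion as input; without that, the quotient in the sub-extension is $\bT_{j+1}'$ rather than $\bT_{j+1}$, and condition~(1) does not directly apply.
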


There is a fiber bundle
 $$
 \tilde{\tilde{X}}^+(\bn)\to \cM(\bn)
 $$
whose fiber $F$ consists of the iterated extensions satisfying the
conditions in Proposition \ref{prop:Xbn}. Therefore
 $$
 \tilde{X}^+(\bn):= \tilde{\tilde{X}}^+(\bn)/\GL(\bn) \to \cM(\bn),
 $$
where $\GL(\bn)=\prod \GL(\ba_j)$, has fiber $F/\GL(\bn)$.
The finite group
   \begin{equation} \label{eqn:F}
 \frS_\bn=\{\tau \text{ permutation of } (1,\ldots, b) \, ; \,
   n_{\tau(i)}=n_i, a_{\tau(i)j}=a_{ij} \ \forall i,j \ \}
  \end{equation}
acts (freely) on $\tilde U(\bn)$ and on $\tilde X^+(\bn)$, by permuting the bundles.
The quotient is the fibration (locally trivial in the usual topology)
 $$
 X^+(\bn):=\tilde{X}^+(\bn)/\frS_\bn \to {U}(\bn)=\tilde U(\bn)/\frS_\bn \, .
 $$

\medskip

 The description in Proposition \ref{prop:Xbn} can be applied to the
 critical value $\s_c=\s_m$. In this case,
 $\cN_{\s_m^+}=\cS_{\s_m^+}$. The only difference is that now
 $\bT_{r+1}$ should be of the form $L\to 0$, that is,
 $\bT_{r+1}\in \cN_{\s_c}^s(0,1,0,d_o)=\Jac^{d_o}X$.
 Note that there is an open stratum in $\cS_{\s_m^+}$ corresponding
 to $r=1$, $b=1$, $\bn_o=((n),(1))$. In this case
  $$
  \cM(\bn_o)=M^s(n,d)\times \Jac^{d_o} X.
  $$
 This corresponds to triples
 $\phi:L\to E$ for which $E$ is a stable bundle. So the stratum $X^+(\bn_0)$
 is equal to $\cU_m\subset \cN_{\s_m^+}$, defined in \eqref{eqn:Um}. The remaining strata
 compose $\cD_m$, and all have $n_i<n$.

\subsection{The flip locus $\cS_{\s_c^-}$}

There is an analogous description for $\cS_{\s_c^-}$. As before,
let $b\geq 1$ and $r\geq 1$. Fix $n'\geq 1$ and $d'$ satisfying
(\ref{eqn:1}). Let $(n_1,d_1),\ldots, (n_b,d_b)$ satisfy
(\ref{eqn:2}). Consider $a_{ij}\geq 0$, for $1\leq i\leq b$ and
$2\leq j\leq r+1$, such that $\ba_j=(a_{1j},\ldots,a_{bj})\neq
(0,\ldots,0)$, for all $j$. We assume (\ref{eqn:3}). Write
$\bn=((n_i), \ba_1, \ldots, \ba_r)$.
We consider $\tilde U(\bn)$ as in (\ref{eqn:3.5}).

We define $X^-(\bn)\subset \cS_{\s_c^-}$ as the subset formed by
those triples $T$ admitting a filtration $0=T_0\subset
T_1\subset T_2\subset \cdots \subset T_{r+1}=T$, such that
$$
  \bar{T}_j=T_j/T_{j-1} \cong S(\ba_j):= S_1^{a_{1j}} \oplus \cdots  \oplus
  S_b^{a_{bj}}\, ,
$$
is the maximal polystable subtriple of $T/T_{j-1}$,
where $(E_1,\ldots, E_b)\in \tilde U(\bn)$, $2\leq j\leq r+1$. It must
be $T_{1}\in \cN_{\s_c}^s(n',1,d',d_o)$. Then
  $$
  \cS_{\s_c^-}=\bigsqcup_{\bn} X^-(\bn)\, .
  $$

Note again that the finite group $\frS_\bn$ given in (\ref{eqn:F}) acts
on $\tilde U(\bn)$. Then there is a fibration
 \begin{equation*}
 \tilde{X}^-(\bn)\to \cM(\bn):= \tilde U(\bn) \times \cN_{\s_c}^s(n',1,d',d_o).
  \end{equation*}
and $X^-(\bn)=\tilde{X}^-(\bn)/ \frS_\bn$.

\begin{proposition}[{\cite[Proposition 5.3]{Mu-HS}}] \label{prop:Xbn-}
  Let $(E_1,\ldots, E_b, T_{1})\in \cM(\bn)$.
  Define triples $\bT_j$ by recursion as follows:
  $\bT_{1}=T_{1}$, and for $2\leq j\leq r+1$
  define $\bT_j$  as an extension
   $$
   0\to \bT_{j-1} \to \bT_{j}\to S(\ba_j)\to 0\, .
   $$
  Let $\xi_j\in \Ext^1(S(\ba_j),\bT_{j-1})$ be the corresponding extension class.
  Write $T:=\bT_{r+1}$. Then $T\in X^-(\bn)$ if and only if the
  following conditions are satisfied:
  \begin{enumerate}
  \item The extension class $\xi_j\in \Ext^1(S(\ba_j),\bT_{j-1})= \prod_i
  \Ext^1(S_i,\bT_{j-1})^{a_{ij}}$ lives in \newline
  $\prod_i  V(a_{ij}, \Ext^1(S_i,\bT_{j-1}))$.
   \item Consider the
   map $\bT_{j-1}\to S(\ba_{j-1})$ and the element $\xi_j'$ which
   is the image of $\xi_j$ under
   $\Ext^1(S(\ba_j),\bT_{j-1}) \to \Ext^1(S(\ba_{j}),S(\ba_{j-1}))$.
   Then the element $\xi_j'\in \Ext^1(S(\ba_{j}),S(\ba_{j-1}))= \prod_i
  \Ext^1(S(\ba_{j}),S_i)^{a_{i,j-1}}$ lives in $\prod_i
  V(a_{i,j-1}, \Ext^1(S(\ba_{j}),S_i))$.
  \end{enumerate}

  Two extensions $\xi_j$ give rise to isomorphic $\bT_{j}$
  if and only if the triples $\bT_{j-1}$ are
  isomorphic and the extension classes are the same up to action
  of the group $\GL(\ba_j):=\GL(a_{1j})\times \cdots \times \GL(a_{bj})$.
\end{proposition}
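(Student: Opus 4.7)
The plan is to proceed by induction on the filtration length $r$, in direct parallel with the argument for Proposition~\ref{prop:Xbn}; the two propositions are formally dual, describing triples built up from a $\s_c$-stable subtriple (here) or built down from a $\s_c$-stable quotient (there). The base case $r=0$ is immediate: $T=T_1\in\cN_{\s_c}^s(n',1,d',d_o)$ and both non-degeneracy conditions are vacuous. For the inductive step, given $T\in X^-(\bn)$, the subtriple $T_r$ with its inherited filtration lies in $X^-(\bn')$ with $\bn'=((n_i),\ba_2,\ldots,\ba_r)$, so by induction $T_r\cong\bT_r$ is an iterated extension whose classes $\xi_2,\ldots,\xi_r$ satisfy (1) and (2). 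The remaining task is to verify (1) and (2) for the top-level extension
\[
0\to\bT_r\to T\to S(\ba_{r+1})\to 0
\]
with class $\xi_{r+1}\in\Ext^1(S(\ba_{r+1}),\bT_r)$.

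The key point is that the two non-degeneracy conditions encode precisely the maximality of the successive polystable subtriples in the filtration. Since the $E_i$ are pairwise non-isomorphic stable bundles, one has $\Hom(S_i,S_{i'})=0$ for $i\neq i'$ and $\End(S_i)=\CC$, so $\Aut(S(\ba_j))=\GL(\ba_j)$. If condition~(1) fails---say the $a_{i,r+1}$ components of $\xi_{r+1}$ in $\Ext^1(S_i,\bT_r)$ are linearly dependent---then a suitable element of $\GL(a_{i,r+1})$ kills one component, splitting off a direct summand $S_i$ and yielding an isomorphism $T\cong T''\oplus S_i$ with $T''\supset\bT_r$. The resulting copy of $S_i\subset T$ maps disjointly from $T_r/T_{r-1}$ into $T/T_{r-1}$, so that $(T_r/T_{r-1})\oplus S_i=S(\ba_r)\oplus S_i$ is a polystable subtriple of $T/T_{r-1}$ strictly containing $T_r/T_{r-1}$, contradicting the maximality assumption. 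Condition~(2) is analogous: a degeneracy of $\xi'_{r+1}$ produces a copy of some $S_i\subset S(\ba_{r+1})$ that lifts to a direct summand of the pushed-out extension $0\to S(\ba_r)\to T/T_{r-1}\to S(\ba_{r+1})\to 0$, again strictly enlarging the polystable subtriple $S(\ba_r)$ of $T/T_{r-1}$. Running these implications backwards gives the converse direction.

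The uniqueness clause follows from the standard fact that extensions with fixed ends are isomorphic (as extensions) precisely when their $\Ext^1$ classes coincide, combined with the identification $\Aut(S(\ba_j))=\GL(\ba_j)$ above. The main obstacle I anticipate is the combinatorial bookkeeping: one must track how the non-degeneracy conditions at consecutive filtration levels interact through the quotient maps, so that the resulting socle-type filtration on $T$ has exactly the prescribed type $\bn$---no summand is missed and none is counted twice. This accounting is, however, entirely parallel to the analysis already carried out for Proposition~\ref{prop:Xbn} in~\cite{Mu-HS}, and presents no substantially new difficulty.
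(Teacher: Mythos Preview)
The paper does not give its own proof of this proposition: it is quoted verbatim as \cite[Proposition 5.3]{Mu-HS} and used as input. Your overall strategy---induct on $r$ and dualize the argument for Proposition~\ref{prop:Xbn}---is exactly the natural one, and is presumably what is done in \cite{Mu-HS}. So at the level of approach there is nothing to compare.

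That said, your sketch contains a concrete slip in the treatment of condition~(2). You write that a degeneracy of $\xi'_{r+1}$ ``produces a copy of some $S_i\subset S(\ba_{r+1})$ that lifts to a direct summand of the pushed-out extension''. But condition~(2) decomposes $\xi'_j\in\Ext^1(S(\ba_j),S(\ba_{j-1}))=\prod_i\Ext^1(S(\ba_j),S_i)^{a_{i,j-1}}$ according to the summands $S_i$ of $S(\ba_{j-1})$, the \emph{sub}object, not of $S(\ba_j)$. A dependency among the $a_{i,j-1}$ components means (after a $\GL(a_{i,j-1})$-change of basis in $S(\ba_{j-1})$) that some $S_i\subset S(\ba_{j-1})$ has trivial pushout, so that $\bT_j/\bT_{j-2}\cong S_i\oplus Q$ with $S_i$ coming from the bottom, not lifting from the top. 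Your argument that this ``strictly enlarges the polystable subtriple $S(\ba_r)$ of $T/T_{r-1}$'' therefore does not go through as written: the splitting $S_i\oplus Q$ contains $S(\ba_{j-1})=S_i\oplus S'$ and it is not immediate that the socle of $T/T_{j-2}$ strictly exceeds $S(\ba_{j-1})$. The correct role of condition~(2) is to guarantee that the filtration you have built is \emph{the} canonical socle-type filtration (so that the type is really $\bn$), and the argument for this requires tracking how the summand $S_i$ interacts with the full quotient $T/T_{j-2}$, not just with $\bT_j/\bT_{j-2}$.

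A smaller point: your inductive hypothesis ``$T_r\in X^-(\bn')$'' presupposes that $T_r$ is $\s_c^-$-stable and $\s_c^+$-unstable, which is not part of the given data. The induction should be phrased purely in terms of the equivalence between the filtration/maximality conditions and the extension-class conditions, without reference to membership in a flip locus.
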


 The description in Proposition \ref{prop:Xbn-} is also valid for $\s_c=\s_M$, with no change.
 In this case, $\cN_{\s_M^-}=\cS_{\s_M^-}$.

\section{The strata $X^+(\bn)$ for $r=1$} \label{sec:r=1}

Now we move on to the issue of giving an explicit description
for the strata $X^\pm (\bn)$ corresponding to a critical value
$\sigma_c$ and a type $\bn=((n_i), \ba_1, \ldots, \ba_r)$, using Propositions \ref{prop:Xbn}
and \ref{prop:Xbn-}. Recall that each $n_i$ determines the corresponding $d_i$ by \eqref{eqn:2}. Our aim is to prove that $[X^\pm(\bn)]\in
\RC$.

We start with a simple case.

\begin{proposition}\label{prop:no-finite-group}
Let $\sigma_c$ be any critical value (possibly $\sigma_c=\sigma_m,\sigma_M$).
Let $n\geq 1$, and $\bn=((n_i), \ba_1, \ldots, \ba_r)$ be a type
such that $\frS_{\bn}=\{1\}$. If $\sigma_c=\sigma_m$, we assume $\bn\neq \bn_0$. Suppose that $[M^s(n'',d'')]$ and $[\cN_{\sigma_c}^s(n',1,d',d_o)]$ are in $\RC$, for any $n',n''<n$,
$\gcd(n'',d'')=1$. Assume also that $\gcd(n_i,d_i)=1$, for every $i=1,\ldots,b$.
Then $[X^\pm(\bn)]\in K(\Mot)$ belongs to $\RC$.
\end{proposition}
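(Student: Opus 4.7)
The strategy is to compute $[X^\pm(\bn)]$ in $\hat K(\VarC)$ using the geometric descriptions in Propositions~\ref{prop:Xbn} and~\ref{prop:Xbn-}, and then apply $\chi$; since $\chi(\wRC)=\RC$ by~\eqref{eqn:RC}, it suffices to exhibit $[X^\pm(\bn)]$ in $\wRC$. Because $\frS_\bn=\{1\}$, no symmetric group quotient enters and $X^\pm(\bn)=\tilde X^\pm(\bn)$. I would first show that the base $\cM(\bn)=\tilde U(\bn)\times \cN_{\s_c}^s(n',1,d',d_o)$ has class in $\RC$: the coprime hypothesis gives $M^s(n_i,d_i)=M(n_i,d_i)$ with class in $\RC$ by assumption, and $\tilde U(\bn)$ is the open complement in $\prod_i M(n_i,d_i)$ of the loci $\{E_i\cong E_j\}$ over pairs with $(n_i,d_i)=(n_j,d_j)$; fine-moduli representability makes each such locus isomorphic to a smaller product of the same moduli spaces, so inclusion-exclusion places $[\tilde U(\bn)]$ in $\RC$. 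The hypothesis on $[\cN_{\s_c}^s(n',1,d',d_o)]$ (which for $\s_c=\s_m$ degenerates to the Jacobian case recorded in~\eqref{eqn:wRC}) then yields $[\cM(\bn)]\in \RC$.

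Next I would construct $\tilde{\tilde X}^+(\bn)\to \cM(\bn)$ as an iterated tower of Zariski-locally trivial fibre bundles, following the downward recursion $j=r,r-1,\ldots,1$ of Proposition~\ref{prop:Xbn}. At stage $j$, writing $B_{j+1}$ for the intermediate total space parametrising $(E_1,\ldots,E_b,\bT_{j+1})$ together with the previously chosen extension data, the fibre over a point is the locus of $\xi_j$ satisfying conditions~(1) and~(2). Deformation theory on the curve $C$, together with the vanishing of $\Hom$'s between pairwise non-isomorphic stable objects, forces the relevant $\Ext^1$-groups to have constant dimension, so each $\Ext^1(\bT_{j+1},S_i)$ globalises to a vector bundle $\cE_{ij}$ on $B_{j+1}$ of constant rank $N_{ij}$. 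Condition~(1) cuts out the product of Stiefel bundles $\prod_i V(a_{ij},\cE_{ij})$, whose fibrewise class is $\prod_i\prod_{l=0}^{a_{ij}-1}(\LL^{N_{ij}}-\LL^{l})\in\ZZ[\LL]$. For $j<r$, condition~(2) further constrains via the linear pullback $\Ext^1(\bT_{j+1},S(\ba_j))\to \Ext^1(S(\ba_{j+1}),S(\ba_j))$; assuming this map has constant rank (again from the same deformation-theoretic inputs), its image and kernel form sub-bundles and the locus in question is the preimage of a product of Stiefel-open subsets in the image, once more open in a vector bundle with fibrewise class polynomial in $\LL$.

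The action of $\GL(\bn)=\prod_j\GL(\ba_j)$ is free, and each $\GL(k)$-action on a Stiefel bundle admits Zariski-local sections, so the quotient $\tilde X^+(\bn)=\tilde{\tilde X}^+(\bn)/\GL(\bn)$ is realised as an iterated Grassmannian bundle whose fibrewise classes are Gaussian binomial polynomials in $\LL$. Multiplying through, $[\tilde X^+(\bn)]=[\cM(\bn)]\cdot P(\LL)$ for some $P\in\ZZ[\LL,\LL^{-1}]$, hence $[X^+(\bn)]\in \RC$. The argument for $X^-(\bn)$ is entirely parallel, using Proposition~\ref{prop:Xbn-} with the recursion going upward ($\bT_j$ built as an extension of $\bT_{j-1}$ by $S(\ba_j)$). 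The hard part will be verifying the constant-rank statements for the $\Ext^1$-groups and especially for the pullback maps appearing in condition~(2), as $(E_1,\ldots,E_b,\bT_{r+1})$ and the intermediate extension data vary over $\cM(\bn)$; once those are in hand, the rest is organised bookkeeping of Zariski-locally trivial fibrations with polynomial fibre classes.
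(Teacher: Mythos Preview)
Your proposal is correct and follows essentially the same line as the paper's proof: both establish $[\cM(\bn)]\in\RC$ from the hypotheses via inclusion--exclusion on diagonals, and both argue that the iterated fibration over $\cM(\bn)$ is Zariski locally trivial with fibre whose class is a rational function of $\LL$, so that $[X^\pm(\bn)]=[\cM(\bn)]\cdot P(\LL)\in\RC$. The paper compresses your explicit Stiefel analysis by invoking \cite[Theorem~6.1]{Mu-HS} to say the fibre $F$ is affinely stratified, and it handles the $\GL(\bn)$-quotient purely in $K$-theory by writing $[F/\GL(\bn)]=P(\LL)/[\GL(\bn)]$ rather than describing the quotient geometrically; your ``iterated Grassmannian bundle'' phrasing is slightly imprecise once condition~(2) is in play, but your conclusion is the same. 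The constant-rank issue you flag as the hard part is exactly what the paper disposes of by citing $\Ext^0=\Ext^2=0$ from \cite{Mu-HS}: surjectivity of the pullback in condition~(2) follows from the vanishing of the relevant $\Ext^2$, so its rank is constant.
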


\begin{proof}
If the group $\frS_\bn$ is trivial, then by \cite[Theorem 6.1]{Mu-HS}, the fibration
 $$
 F \to  \tilde{\tilde{X}}^\pm(\bn) \to \cM(\bn)
 $$
has fiber $F$ which is \emph{affinely stratified} (AS, for short). That means
that it is an iterated fiber bundle of spaces which are
an affine space minus a linear subspace. It is easy to see that a Zariski
locally trivial fibration whose base and fibre are both AS has AS total space.
Therefore, $[F]=P(\LL)$ for some polynomial $P$.
Then
 \begin{equation}\label{zzz}
 F/\GL(\bn) \to X^\pm(\bn) \to \cM(\bn)
 \end{equation}
has fiber such that $[F/\GL(\bn)]=P(\LL)/[\GL(\bn)]$, which is
a series in $\LL$.

As $\gcd(n_i,d_i)=1$, there are universal bundles $\cE_i  \to M^s(n_i,d_i) \x C$.
Analogously, there is a universal triple $\cT' \to \cN' \x C$
since there are universal bundles over any moduli space
of $\sigma$-stable triples of type $(n,1)$.
The bundle (\ref{zzz}) is constructed iteratively by taking bundles of
relative $\Ext^1$-groups of the $\cE_i$'s and $\cT'$. All these bundles are 
then Zariski locally trivial, since the dimension of the $\Ext^1$-groups is 
constant, because $\Ext^0=\Ext^2=0$ in all these cases, as it is proven
in \cite{Mu-HS}.
This means that 
(\ref{zzz}) is locally trivial in the Zariski topology, and hence
$[X^\pm(\bn)]= [\cM(\bn)]\, [F/\GL(\bn)]$.

The basis of the fibration is the space
 $$
 \cM(\bn)= \tilde U(\bn)\times \cN_{\s_c}^s(n',1,d',d_o) =
 \left( \prod_{i=1}^b M^s(n_i,d_i) \setminus \Delta \right) \times \cN_{\s_c}^s(n',1,d',d_o)\, ,
 $$
where $\Delta$ is a union of some diagonals, each of which is a product of some moduli
spaces $M^s(n_j,d_j)$.
Therefore
 $$
 [X^\pm(\bn)]= [\tilde U(\bn)]\, [\cN_{\s_c}^s(n',1,d',d_o)]\, P(\LL)/[\GL(\bn)]
 \in \RC.
 $$
\end{proof}

Now we consider the strata $X^+(\bn)\subset\cS_{\s_c^+}$ where the standard filtration
has only one nontrivial step, hence it is of the form $0\subset T_1\subset T_2=T$.
So $r=1$, and we only have $\ba_1=(a_1, a_2, \ldots, a_b)$,
where all $a_i>0$. We write the type as a matrix
\begin{equation}\label{eqn:bn}
\bn= \begin{pmatrix} (n_i) \\ (a_{i1}) \end{pmatrix} =
\begin{pmatrix}
    n_1 & n_2 & \cdots & n_b \\
    a_1 & a_2 & \cdots & a_b
\end{pmatrix}.
\end{equation}

To describe geometrically the strata $X^+(\bn)$ in this and in
the following section, we will make use of  partitions of sets. If
  $$[b]:=\{1,\ldots,b\}$$
denotes the set of the first $b$ positive integer numbers, we define a \emph{partition} $\pi$ to
be a collection of disjoint subsets of $[b]$ whose union is $[b]$. An
element of a partition is called a \emph{brick}. We can define a partition by an
equivalence relation, for which the bricks are the equivalence classes.

There is a natural partial order on the set of partitions of $[b]$.
If $\pi,\pi'$ are two such partitions, then we say that $\pi\leq \pi'$
if any $\beta\in \pi$ is a subset of some $\beta'\in \pi'$.

Let $\pi'$ and $\pi''$ be two
partitions on $[b]$. Then the \emph{intersection} $\pi'\wedge \pi''$ of
the partitions $\pi'$ and $\pi''$ is defined as
\[
\pi'\wedge \pi'' =\{ \beta'\cap \beta''\subset [b]\, ; \, \beta'\in \pi'\text{ and
}\beta''\in\pi''\}.
\]
Obviously, $\pi'\wedge\pi''\leq \pi'$ and $\pi'\wedge\pi''\leq \pi''$.

\begin{proposition}\label{prop:r=1}
Let $\sigma_c$ be any critical value.
Let $n\geq 1$, and $\bn$ be given by \eqref{eqn:bn},
and assume that $\frS_\bn$ is non-trivial. Suppose that
$[M^s(n'',d'')]$ and $[\cN_{\sigma_c}^s(n',1,d',d_o)]$ are in $\RC$, for any $n',n''<n$,
$\gcd(n'',d'')=1$. Assume also that $\gcd(n_i,d_i)=1$, for every $i=1,\ldots,b$. Then $[X^\pm(\bn)]\in \RC$. 
\end{proposition}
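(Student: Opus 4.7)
The plan is to follow the same template as Proposition~\ref{prop:no-finite-group}, now accounting for the free action of $\frS_\bn$. First I would build the Zariski locally trivial fibration $\tilde X^\pm(\bn)\to \cM(\bn)$ with fibre $F/\GL(\bn)$ via iterated relative $\Ext^1$-bundles of the universal objects $\cE_i\to M^s(n_i,d_i)\times C$ and $\cT'\to \cN_{\s_c}^s(n',1,d',d_o)\times C$, exactly as in that proposition. The construction is $\frS_\bn$-equivariant (the group simply permutes the indices $i$), and since $\Ext^0=\Ext^2=0$ in all the relevant cases, the equivariant $\Ext^1$-bundles have constant rank and descend to locally free sheaves on the quotient base $\cM(\bn)/\frS_\bn = U(\bn)\times \cN_{\s_c}^s(n',1,d',d_o)$. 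Granting this, the quotient fibration
\[
X^\pm(\bn)\to U(\bn)\times \cN_{\s_c}^s(n',1,d',d_o)
\]
is Zariski locally trivial with fibre $F/\GL(\bn)$ (whose class is a series in $\LL$), so that
\[
[X^\pm(\bn)] = [U(\bn)]\cdot [\cN_{\s_c}^s(n',1,d',d_o)]\cdot [F/\GL(\bn)],
\]
the second and third factors already lying in $\RC$. It therefore suffices to prove $[U(\bn)]\in\RC$.

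To this end, I would group the $b$ columns of $\bn$ by the pair $(n_i,a_i)$: writing the distinct pairs as $(m_1,\alpha_1),\ldots,(m_s,\alpha_s)$ with multiplicities $b_1,\ldots,b_s$, one has $\frS_\bn = \frS_{b_1}\times\cdots\times\frS_{b_s}$ and
\[
\Big(\prod_{i=1}^b M^s(n_i,d_i)\Big)\big/\frS_\bn \;\cong\; \prod_{l=1}^s \Sym^{b_l}\!\big(M^s(m_l,d_{m_l})\big).
\]
Because $E_i\not\cong E_j$ is automatic whenever $n_i\neq n_j$, the non-trivial diagonals to be removed live inside each subproduct indexed by those $l$ sharing a common value $m$ of $m_l$. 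For each such $m$, the resulting space is the complement, in $\prod_{l:\,m_l=m}\Sym^{b_l}(M^s(m,d_m))$, of a union of partial-diagonal closed subsets, each indexed by a partition of the set of slots compatible with the $(m,\alpha)$-blocking, in the sense of the partition lattice introduced above. Inclusion--exclusion (or Möbius inversion) over this lattice expresses its class as an integer polynomial in $[\Sym^k(M^s(m,d_m))]$, $k\leq b$. Since $\frS_\bn\neq\{1\}$ forces $b\geq 2$, \eqref{eqn:3} gives $n_i<n$, so the hypothesis yields $[M^s(m,d_m)]\in\RC$, and closure of $\RC$ under $\lambda$-operations yields $[\Sym^k(M^s(m,d_m))]\in\RC$. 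Multiplying over all values $m$ gives $[U(\bn)]\in\RC$, and therefore $[X^\pm(\bn)]\in\RC$.

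The main obstacle is the equivariant descent in the first step: establishing that the iterated $\Ext^1$-bundles on $\cM(\bn)$ actually descend to locally free sheaves on $\cM(\bn)/\frS_\bn$, so that the quotient fibration is Zariski locally trivial rather than only analytically locally trivial. This rests on the hypothesis $\gcd(n_i,d_i)=1$, which guarantees the existence of $\frS_\bn$-equivariant universal bundles on the cover and hence of honest $\Ext^1$-bundles after taking invariants. Once this technical point is in place, the remainder of the argument is a combinatorial unwinding of the symmetric-product inclusion--exclusion.
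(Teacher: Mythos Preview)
There is a real gap at the descent step, and it is not a technicality that the coprimality hypothesis resolves. The quotient fibration $X^\pm(\bn)\to U(\bn)\times\cN'$ is in general \emph{not} Zariski locally trivial, and its class does \emph{not} factor as $[U(\bn)]\cdot[\cN']\cdot[F/\GL(\bn)]$. The point is that $\frS_\bn$ acts on the fibre $\prod_i\Gr(a_i,\Ext^1(T',S_i))$ by \emph{permuting the factors}; Zariski local triviality downstairs would force the \'etale $\frS_\bn$-cover $\tilde U(\bn)\to U(\bn)$ to split Zariski-locally, which it does not (it is connected as soon as $\dim M^s(n_i,d_i)\ge 1$). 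Having $\frS_\bn$-equivariant universal bundles only guarantees that the \emph{direct sum} $\bigoplus_{i}\Ext^1(T',S_i)$ descends to a vector bundle on the quotient; it says nothing about the product of the individual Grassmannians, whose very decomposition into factors is what the monodromy mixes. A toy model makes the failure of the product formula explicit: if $\tilde Y\to Y$ is a connected \'etale double cover and $Z=(\tilde Y\times\PP^1\times\PP^1)/(\ZZ/2)$, with $\ZZ/2$ acting by the deck involution together with the swap of the two $\PP^1$'s, then projecting $Z$ to $\Sym^2\PP^1$ yields $[Z]=[\tilde Y]\cdot\LL^2+[Y]\cdot(1+\LL)$, which differs from $[Y]\cdot(1+\LL)^2$ whenever the cover is nontrivial.

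The paper avoids this by never attempting to factor over $U(\bn)$. It first \emph{extends} the Grassmannian fibration across all the diagonals, obtaining $\tilde X^+(\bn)^\Delta$ over $\prod_{\gamma\in\pi_1} M^s(n_\gamma,d_\gamma)^{|\gamma|}\times\cN'$; the $\frS_\bn$-quotient of this enlarged space is then, as a bundle over $\cN'$, a fibre product of honest symmetric powers $\Sym^{|\gamma|}(\cE_\gamma)$ of the Grassmannian bundles $\cE_\gamma\to M^s(n_\gamma,d_\gamma)\times\cN'$, and this lies in $\RC$ via the $\lambda$-operations. One then removes the diagonal strata and shows, by analysing $\mathrm{Stab}_{\frS_\bn}(\pi)$ for each partition $\pi\le\pi_0$, that every stratum of the complement is again such a fibre product of symmetric powers over $\cN'$. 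Your inclusion--exclusion for $[U(\bn)]$ is the shadow of this on the base, but the fibre must be carried along throughout: the $\lambda$-operations have to be applied to the total spaces $\cE_\gamma$, not merely to $M^s(n_\gamma,d_\gamma)$.
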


\begin{proof}
We will only do the case of $X^+(\bn)$, the other one being analogous.
Given $\bn$, we have two partitions of $[b]$: $\pi_0$ defined by the equivalence relation
\begin{equation}\label{eq:pi0partition}
i\sim j \iff n_i=n_j,
\end{equation}
and $\pi_1$ defined by
 $$i\sim j\iff n_i=n_j\text{ and }a_i=a_j.$$
Of course, we have $\pi_1\leq \pi_0$.
With the notations of Section \ref{sec:flipS+},
 \[
 \tilde{U}(\bn)=\prod_{\beta\in \pi_0} \left( M^s(n_\beta,d_\beta)^{|\beta|} \setminus \Delta_\beta\right),
 \]
where $\Delta_\beta$ stands for the big diagonal, and
we write $n_\beta:=n_i$ and $d_\beta:=d_i$ for any $i\in \beta$.
The space $\tilde{X}^+(\bn)$ is a bundle over $\tilde{U}(\bn)\times
\cN'$, where $\cN'=\cN_{\s_c}^s(n',1,d',d_o)$, whose fiber over $(S_1,\ldots,S_b,T')$ is
 \begin{equation}\label{eqn:hoy1}
 F =\prod_{\beta\in \pi_0}\prod_{\substack{\gamma\in \pi_1\\ \gamma\subset
 \beta}}\Gr(a_\gamma,\Ext^1(T',S_\gamma))^{|\gamma|} = \prod_{\gamma\in \pi_1}
 \Gr(a_\gamma,\Ext^1(T',S_\gamma))^{|\gamma|} .
 \end{equation}

\medskip \noindent \textbf{Step 1.} We extend this bundle to a
bundle $\tilde{X}^+(\bn)^\Delta$ over the union
of all diagonals, that is, we have the bundle
\[
\tilde{X}^+(\bn)^\Delta\to \prod_{\beta\in \pi_0}
M^s(n_\beta,d_\beta)^{|\beta|} \times\cN'= \prod_{\gamma\in \pi_1}
M^s(n_\gamma,d_\gamma)^{|\gamma|}\times\cN' ,
\]
with fiber (\ref{eqn:hoy1}).

We want to write the total space $\tilde{X}^+(\bn)^\Delta$ as a product of
bundles. For each $\gamma\in\pi_1$, 
we have a Zariski locally trivial bundle
 \begin{equation}\label{eqn:factor bundle}
 \cE_\gamma \to M^s(n_\gamma,d_\gamma)\x \cN'
 \end{equation}
with fiber
$\Gr(a_\gamma,\Ext^1(T',S_\gamma))$ over $(S_\gamma,T')\in M^s(n_\gamma,d_\gamma)\x \cN'$.
We can consider $\cE_\gamma$ as a bundle over $\cN'$.
Its fiber over $T'$ is the Zariski locally trivial bundle $\cE_{\gamma,T'}$ with basis
$M^s(n_\gamma,d_\gamma)$ and fiber 
$\Gr(a_\gamma,\Ext^1(T',S_\gamma))$.
Then $\tilde{X}^+(\bn)^\Delta$ is the fiber product
of the bundles $(\cE_\gamma)^{|\gamma|}$ over $\cN'$:
 $$
 \tilde{X}^+(\bn)^\Delta= \prod_{\substack{\gamma\in \pi_1 \\ \cN'}} (\cE_{\gamma})^{|\gamma|}.
 $$
Here we are thinking of $\tilde{X}^+(\bn)^\Delta$ as a bundle over $\cN'$, as we just did
for each $\cE_\gamma$.

Let us take the quotient of $\tilde{X}^+(\bn)^\Delta$ by the finite group $\frS_\bn$.
We use partitions to describe these groups. For any partition $\pi$ of $[b]$, let
$$\frS_\pi=\prod_{\beta\in \pi} \frS_\beta.$$
So $\frS_\bn=\frS_{\pi_1}$. Then
 $$
 \tilde{X}^+(\bn)^\Delta/\frS_\bn= \prod_{\substack{\gamma\in \pi_1 \\ \cN'}} (\cE_{\gamma})^{|\gamma|}/\frS_\gamma
 = \prod_{\substack{\gamma\in \pi_1 \\ \cN'}} \Sym^{|\gamma|} (\cE_{\gamma})\, ,
 $$
where $\Sym^{|\gamma|} (\cE_{\gamma})$ is the bundle over $\cN'$ whose fiber over $T'$ is
the symmetric product $\Sym^{|\gamma|} (\cE_{\gamma,T'})$.
Therefore, since by assumption $[M^s(n_\gamma,d_\gamma)], [\cN']\in\RC$, it follows that $\cE_{\gamma,T'}$ is motivated by $C$ and, by definition, the same holds for $\Sym^{|\gamma|} (\cE_{\gamma,T'})$. So, we have
 $$
 [\tilde{X}^+(\bn)^\Delta/\frS_\bn] \in \RC\, .
 $$

\medskip \noindent \textbf{Step 2.}
Now we deal with the diagonals. Consider the partition $\pi_0$ defined by \eqref{eq:pi0partition}, and let $\beta$ be a subset of a brick of $\pi_0$. We define
$$
 \cE_\beta^\Delta = \prod_{\substack{i\in \beta \\ M^s(n_\beta,d_\beta)\times \cN'}} \cE_i\, ,
$$
as the fiber product over $M^s(n_\beta,d_\beta) \times \cN'$ of the bundles $\cE_i \to M^s(n_\beta,d_\beta) \times \cN'$ given in \eqref{eqn:factor bundle}. So there is a
fibration
$$
  \prod_{\substack{\gamma\in \pi\wedge \pi_1\\ \gamma\subset
 \beta}}\Gr(a_\gamma,\Ext^1(T',S_\gamma))^{|\gamma|}
   \to \cE_\beta^\Delta \to M^s(n_\beta,d_\beta)\times \cN'\, .
$$
We have the natural inclusion $\cE_\beta^\Delta \hookrightarrow (\cE_\beta)^{|\beta|}$
as the smallest diagonal. Clearly, there is a
equivalence between partitions $\pi\leq \pi_0$ and diagonals: for any
$\pi\leq \pi_0$, let
\[
\cE^\Delta_\pi = \prod_{\beta\in \pi} \cE^\Delta_\beta \to \cN'\, ,
\]
where we have the obvious inclusion $\cE_\pi^\Delta\subset \tilde{X}^+(\bn)^\Delta$.
Hence
$$
\tilde{X}^+(\bn)=\tilde{X}^+(\bn)^\Delta \setminus \left(
\bigcup_{\pi\leq \pi_0} \cE^\Delta_\pi \right).
$$
Since we know that $[\tilde{X}^+(\bn)^\Delta/\frS_\bn] \in \RC$, the proof is complete as long as we show that
$$
 \left( \bigcup_{\pi\leq \pi_0} \cE^\Delta_\pi \right)/\frS_\bn
$$
lies in $\RC$. This is a stratified space, so we check that each
stratum is motivated by $C$. So, fix $\pi\leq \pi_0$ and consider
 $\frS_\bn \cdot \pi=\{ g\cdot \pi \, ; \, g\in \frS_\bn\}$ the orbit of $\pi$ under $\frS_\bn$. The corresponding
 stratum in the quotient is
  \begin{equation}\label{eqn:hoy4}
 \left( \bigcup_{\pi'\in \frS_\bn\cdot\pi} \cE_{\pi'}^\Delta \right) / \frS_\bn
 = \cE_\pi^\Delta / \text{Stab}_{\frS_\bn}(\pi) \, ,
  \end{equation}
where
 $$
 \text{Stab}_{\frS_\bn}(\pi) = \{ g\in \frS_\bn \, ; \, g\cdot \pi=\pi \}.
 $$
So it is enough to see that (\ref{eqn:hoy4}) is motivated by $C$.

Consider now a partition $P$ of the set $\pi$ (this is \emph{not} a partition of $[b]$),
defined as follows. If $\beta_1,\beta_2\in \pi$, then
 $$
 \beta_1 \sim \beta_2 \iff |\beta_1\cap \delta|=|\beta_2\cap \delta|, \ \text{for
 all } \delta \in \pi_1.
 $$
It is easy to see that we have an exact sequence of
groups
 $$
 1 \to \frS_{\pi\wedge \pi_1} \to \text{Stab}_{\frS_\bn}(\pi) \to \frS_P \to 1 \, ,
 $$
where $\frS_P= \prod_{p\in P} \frS_p$, and the projection $\text{Stab}_{\frS_\bn}(\pi) \to \frS_P$ associates to each $g\in \text{Stab}_{\frS_\bn}(\pi)$ the induced permutation of bricks in $P$.

The quotient (\ref{eqn:hoy4}) is then rewritten as
  \begin{equation}\label{eqn:hoy5}
  \cE_\pi^\Delta / \text{Stab}_{\frS_\bn}(\pi)  =
  (\cE_\pi^\Delta / \frS_{\pi\wedge \pi_1} )/\frS_P  \, .
  \end{equation}
From the definition of $\cE_\pi^\Delta$, we have
  \begin{equation}\label{eqn:hoy5b}
   \cE_\pi^\Delta / \frS_{\pi\wedge \pi_1}  =
   \prod_{\beta \in \pi} \cE_\beta^\Delta / \frS_{\pi\wedge \pi_1,\beta}\, ,
  \end{equation}
where $\frS_{\pi\wedge \pi_1,\beta}=\prod_{A\in\pi\wedge \pi_1}\frS_{A\cap\beta}$. Therefore the
quotient
 \begin{equation}\label{eqn:hoy6}
  \cE_\beta^\Delta / \frS_{\pi\wedge \pi_1,\beta} \to
  M^s(n_\beta,d_\beta)\x \cN'
  \end{equation}
is a fiber bundle with fiber
 $$
 \prod_{\substack{\gamma \in \pi\wedge\pi_1 \\ \gamma\subset\beta}} \Sym^{|\gamma|}
 \Gr(a_\gamma,\Ext^1(T',S_\gamma))\, .
 $$

Fix $p\in P$. So $p$ is a subset of $\pi$ and, from the definition of $P$, it follows that for
any $\beta\in p$, the space (\ref{eqn:hoy6}) is the same. Denote it by $\cE_p$ and consider it,
as before, as a bundle $\cE_p$ over $\cN'$.
Thus the quotient of \eqref{eqn:hoy5b} by $\frS_P$ is
  $$
  \left(\prod_{\beta \in \pi} \cE_p \right)/\frS_P = \left(\prod_{p\in P} (\cE_p)^{|p|}\right)/\frS_P
   = \prod_{p\in P} \Sym^{|p|} \cE_p\,
   $$
as a bundle over $\cN'$. This is the quotient \eqref{eqn:hoy5} and clearly its class in $K(\Mot)$ lies in $\RC$.

This finishes the proof of the proposition.
\end{proof}

\section{Two cases of strata $X^+(\bn)$ for $r=2$} \label{sec:r=2}

In this section we want to study some strata $X^+(\bn)\subset\cS_{\s_c^+}$ for which
the standard filtration has $r=2$, i.e., it is of the form $0\subset T_1\subset T_2\subset T_3=T$.
First we analyse the case where the type is
\begin{equation}\label{eqn:bn r=2 I}
\bn= \begin{pmatrix} (n_i) \\ \ba_{1} \\ \ba_{2} \end{pmatrix} =
\begin{pmatrix}
    n_1 & n_2 & \cdots & n_{b-1} & n_b \\
	0 & 0 & \cdots & 0 & 1 \\
	1 & 1 & \cdots & 1 & 0
\end{pmatrix}
\end{equation}
for some $n_i$, $b\geq 3$, and non-trivial $\frS_\bn$. Although we shall only need the case
$b=3$, $n_i=1$ for Theorem \ref{thm:ahora}, we will work out the
general case.

Recall that the basis of this stratum is
 \begin{equation}\label{eqn:un}
 \tilde{U}(\bn)= \left( \prod_{i=1}^b M^s(n_i,d_i) \setminus \Delta \right) \, ,
 \end{equation}
where $\Delta=\{(E_1,\ldots,E_b)\, ; \, E_i\cong E_j,\text{ for some }i,j\}$ is the big diagonal.

\begin{proposition} \label{prop:r=2.1}
Let $\sigma_c$ be any critical value.
Let $n\geq 1$, and $\bn$ be given by \eqref{eqn:bn r=2 I}.
Suppose that
$[M^s(n'',d'')]$ and $[\cN_{\sigma_c}^s(n',1,d',d_o)]$ are in $\RC$, for any $n',n''<n$,
$\gcd(n'',d'')=1$. Assume also that $\gcd(n_i,d_i)=1$, for every $i=1,\ldots,b$.
Then  $[X^\pm(\bn)]\in \RC$.
\end{proposition}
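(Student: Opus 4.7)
The approach is to extend the methods of Proposition \ref{prop:r=1} to the two-step filtration $0\subset T_1\subset T_2\subset T_3=T$ furnished by Proposition \ref{prop:Xbn}. For the given $\bn$, we have $T_1=S_b$, $T_2/T_1=S_1\oplus\cdots\oplus S_{b-1}$, and $T/T_2=T'\in \cN':=\cN_{\s_c}^s(n',1,d',d_o)$. Because the column $(a_{b1},a_{b2})=(1,0)$ differs from the columns $(0,1)$ of the remaining indices, the group $\frS_\bn$ fixes $b$ and permutes only those $i<b$ sharing the same $n_i$. The plan is to realize $X^+(\bn)$ as a tower of two Zariski locally trivial fibrations over $\cM(\bn)=\tilde U(\bn)\times\cN'$, and then to handle the $\frS_\bn$-action and the diagonal locus using the partition techniques from the proof of Proposition \ref{prop:r=1}.

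First I will construct the tower. Using the universal bundles $\cE_i\to M^s(n_i,d_i)\times C$ (available because $\gcd(n_i,d_i)=1$) and the universal triple over $\cN'\times C$, build a Zariski locally trivial bundle $\cY_2\to\cM(\bn)$ whose fiber over $(E_1,\ldots,E_b,T')$ is $\prod_{i=1}^{b-1}\PP(\Ext^1(T',S_i))$, arising from condition (1) of Proposition \ref{prop:Xbn} for $j=2$ after quotient by $\GL(\ba_2)=(\CC^*)^{b-1}$. On top of $\cY_2$, build $\cY_1\to\cY_2$ parametrizing extensions $0\to S_b\to T\to\bT_2\to 0$ with both conditions of Proposition \ref{prop:Xbn} imposed and modulo $\GL(\ba_1)=\CC^*$; the fiber is the complement inside $\PP(\Ext^1(\bT_2,S_b))$ of the arrangement $\{\PP(\ker\rho_i)\}_{i=1}^{b-1}$, where $\rho_i:\Ext^1(\bT_2,S_b)\to\Ext^1(S_i,S_b)$ is the restriction induced by $S_i\hookrightarrow\bT_2$. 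Both stages are Zariski locally trivial because the relevant $\Ext^1$-groups have constant rank (the vanishings $\Ext^0=\Ext^2=0$ established in \cite{Mu-HS} apply), and inclusion-exclusion on the hyperplane arrangement gives the class of the fiber of $\cY_1\to\cY_2$ as a polynomial in $\LL$. Hence the total space, which agrees with $\tilde X^+(\bn)$ over the complement of the big diagonal in $\tilde U(\bn)$, has class $[\cM(\bn)]\cdot Q(\LL)$ for an explicit polynomial $Q$.

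Next, the $\frS_\bn$-quotient and the removal of the diagonal $\Delta$ can be carried out exactly as in the proof of Proposition \ref{prop:r=1}, but now on $\{1,\ldots,b-1\}$ in place of $[b]$. Define $\pi_0$ by $i\sim j\iff n_i=n_j$ so that $\frS_\bn=\frS_{\pi_0}$; first take the $\frS_\bn$-quotient over the whole product (including diagonals), obtaining iterated symmetric products $\Sym^{|\gamma|}\cE_\gamma$ of bundles over $\cN'$; then subtract the diagonal strata indexed by partitions $\pi\leq\pi_0$, computing each via the exact sequence $1\to\frS_{\pi\wedge\pi_1}\to\mathrm{Stab}_{\frS_\bn}(\pi)\to\frS_P\to 1$ from that earlier proof. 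The fixed index $b$ contributes a single factor $M^s(n_b,d_b)$ that is simply carried along, with no symmetrization or diagonal to treat. Since $[M^s(n_i,d_i)]$ and $[\cN']$ lie in $\RC$ by hypothesis, and $\RC$ is closed under products, $\LL$-multiples and $\lambda$-operations, it follows that $[X^+(\bn)]\in\RC$. The case $X^-(\bn)$ is entirely analogous, using Proposition \ref{prop:Xbn-}.

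The main obstacle is the second level of the tower: one must verify that $\cY_1\to\cY_2$ is Zariski locally trivial, i.e.\ that the arrangement $\{\ker\rho_i\}_{i=1}^{b-1}$ has constant combinatorial type over $\cY_2$. This reduces to the joint surjectivity of the $\rho_i$ together with constant-dimensional common kernels, which follows from the long exact sequence of $\Hom(-,S_b)$ applied to $0\to S(\ba_2)\to\bT_2\to T'\to 0$, together with the vanishings $\Ext^0(S_i,S_b)=0$ (the $S_i$ are pairwise non-isomorphic stable triples of equal $\s_c$-slope) and $\Ext^2=0$ available in this context; granted this uniformity, the bookkeeping of symmetric quotients and diagonals proceeds as before.
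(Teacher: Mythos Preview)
Your construction of the two-step tower $\cY_1\to\cY_2\to\cM(\bn)$ matches the paper's, and your identification of the second-level fibre as $\PP\Ext^1(\bT_2,S_b)$ minus the arrangement $\{\PP(\ker\rho_i)\}_{i<b}$ is correct. The gap is in the passage to the $\frS_\bn$-quotient. You write that the fibre of $\cY_1\to\cY_2$ has class $Q(\LL)$ and that therefore the quotient ``can be carried out exactly as in the proof of Proposition~\ref{prop:r=1}''. But the mechanism of that earlier proof is that the total space $\tilde X^+(\bn)^\Delta$ decomposes as a fibre product $\prod_{\cN'}(\cE_\gamma)^{|\gamma|}$ with $\frS_\bn$ permuting identical factors, so the quotient is literally a product of symmetric powers. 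Here the second-level fibre has no such product structure: $\frS_\bn$ permutes the hyperplanes $\ker\rho_i$ inside the single ambient $\PP\Ext^1(\bT_2,S_b)$, and knowing that the complement has class $Q(\LL)$ tells you nothing about the class of its image in the quotient. In other words, from $[\tilde X^+(\bn)]=[\cM(\bn)]\cdot Q(\LL)$ one cannot infer $[\tilde X^+(\bn)/\frS_\bn]=[\cM(\bn)/\frS_\bn]\cdot Q(\LL)$.

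The paper resolves this by first \emph{compactifying} the second-level fibre to the full projective space $\PP\Ext^1(\widetilde T,S_b)$. Then $\bar X_2^+/\frS_\bn\to\bar X_1^+/\frS_\bn$ is a genuine projective bundle, hence multiplicative in $K(\Mot)$, and the first-level quotient $\bar X_1^+/\frS_\bn$ is handled by symmetric products as you describe. The price is that the complement one must subtract is now stratified not just by diagonal partitions $\pi\leq\pi_0$ but by \emph{pairs} $(\pi,I)$ with $I\subset[b-1]$ labelling which sub-fibration $\PP\Ext^1(\widetilde T_I,S_b)$ is hit; the $\frS_\bn$-action moves both $\pi$ and $I$, and the stabiliser is governed by the partition $\pi_I=\{I,I^c\}$ via an exact sequence $1\to\frS_{\pi\wedge\pi_I}\to\mathrm{Stab}_{\frS_\bn}(\pi,I)\to\frS_P\to1$. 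This coupling of the diagonal bookkeeping with the subset $I$ is precisely the new ingredient beyond the $r=1$ case, and it is absent from your proposal. (A secondary point: the factor $M^s(n_b,d_b)$ is not entirely passive when some $n_i=n_b$, since the constraint $E_i\not\cong E_b$ must be kept; the paper builds this into the spaces $Y_i$.)
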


\begin{proof}
Again we only consider the case of $X^+(\bn)$. By Proposition \ref{prop:Xbn}, we have to
construct a two-step iterated fibration, with basis $\tilde{U}(\bn)\times\cN'$ where
$\cN'=\cN_{\s_c}^s(n',1,d',d_o)$, and then take the quotient
by the symmetric group $\frS_\bn$ which is a subgroup of the permutation
group on the first $b-1$ factors of \eqref{eqn:un}.

The first step is a bundle $X_1^+ \to X_0^+:=\tilde{U}(\bn)\times\cN'$ with fibers
 $$
 \prod_{i=1}^{b-1} \PP\Ext^1(T',S_i).
 $$
Recall that we write $S_i$ for the triple $(E_i,0,0)$ with $E_i\in M^s(n_i,d_i)$.

The second step is a bundle $X_2^+ \to X_1^+$. If $\widetilde{T}\in X_1^+$ is such
that $$0\to S_1\oplus\cdots\oplus S_{b-1}\to\widetilde{T}\to T'\to 0,$$ then, from
Proposition \ref{prop:Xbn}, the fiber of $X_2^+$ over $\widetilde T$ is
\begin{equation} \label{eqn:fibers}
 \PP \Ext^1(\widetilde{T},S_b) \setminus \bigcup_{i=1}^{b-1} \PP \Ext^1(\widetilde T_i, S_b)
 \end{equation}
where, for each $i$, $\widetilde T_i$ is the triple fitting in the natural exact sequence
 $$
 0\to S_i \to\widetilde{T}\to\widetilde{T}_i\to 0
 $$
so that
 $$
 0\to\Ext^1(\widetilde T_i,S_b)\to
 \Ext^1(\widetilde T,S_b)\to \Ext^1(S_i,S_b)\to 0.
 $$

\medskip
 We extend the fibration to a basis larger than \eqref{eqn:un}.
Consider for each $1\leq i\leq b-1$, the space
 $$
 Y_i:=(M^s(n_i,d_i) \times M^s(n_b,d_b) \setminus \Delta_i) \times \cN',
 $$
$\Delta_i=\{(E_i,E_b) \, ; \, E_i\cong E_b\}\subset
M^s(n_i,d_i) \times M^s(n_b,d_b)$, so that the fiber product
 $$
 \bar X_0^+ = \prod_{\substack{1\leq i \leq b-1 \\
  M^s(n_b,d_b) \times \cN'}} Y_i
 $$
consists of bundles $(E_1,\ldots, E_{b-1},E_b)$, where $E_i\not\cong E_b$, for $i\neq b$.
The fibration $X_1^+ \to X_0^+$ extends to a fibration $\bar X_1^+ \to \bar X_0^+$.
The dimension of $\Ext^1(\widetilde{T},S_b)$ stays constant as we move over each $Y_i$,
so the fibration $X_2^+ \to X_1^+$ extends to a fibration
$\bar X_2^+ \to \bar X_1^+$ with fibers $\PP \Ext^1(\widetilde{T},S_b)$,
also compactifying the fibers (\ref{eqn:fibers}).

The action of $\frS_\bn$ extends to $\bar X_1^+$. To work out the quotient, consider
again the partition $\pi_0$ of $[b-1]$ given by $i \sim j \iff n_i=n_j$.
Let $\cE_i$ be the bundle
 $$
 \PP \Ext^1(T',S_i) \to \cE_i \to Y_i
 $$
and consider it as a bundle over $B=M^s(n_b,d_b)\times \cN'$. Then
 $$
 \bar{X}_1^+= \prod_B \cE_i =\prod_{\substack{\alpha\in \pi_0\\ B}} (\cE_\alpha)^{|\alpha|}\, .
 $$
The quotient by $\frS_\bn$ is
 \begin{equation} \label{eqn:otro-hoy1}
 \bar{X}_1^+/\frS_\bn =\prod_{\substack{\alpha\in \pi_0\\ B}} \Sym^{|\alpha|} \cE_\alpha \, ,
   \end{equation}
and $\bar{X}_2^+/\frS_\bn \to \bar{X}_1^+/\frS_\bn$ is a projective bundle with fibers
$\PP \Ext^1 ( \widetilde{T},S_b)$ (this family is locally trivial in the Zariski topology
since it is the projectivization of a vector bundle).

We can construct a family of triples parametrized by (\ref{eqn:otro-hoy1})
using that $\gcd(n_i,d_i)=1$, for all $i$.
This family is clearly Zariski locally trivial.
The family of $S_b$ over $M^s(n_b,d_b)$ is also Zariski locally trivial
since $n_b$ and $d_b$ are coprime. So
 $$
 [\bar{X}_2^+/\frS_\bn]=[\bar{X}_1^+/\frS_\bn]\cdot [\PP^N] \in \RC\,.
 $$

\medskip
Now we will deal with the diagonals and the sub-fibrations.
We have fibrations 
 $$
  \PP \Ext^1 (\widetilde{T},S_b) \to \bar{X}_2^+ \to \bar{X}_1^+
  \qquad \text{and} \qquad \bar{X}_1^+ \to \prod_{\substack{1\leq i \leq b-1 \\
  M^s(n_b,d_b) \times \cN'}} Y_i.
 $$
The space $\bar X_2^+$ is thus a stratified space, where the strata are given according to the various
diagonals inside $\Delta$ in \eqref{eqn:un} and according to the sub-fibrations  with fibers $\PP \Ext^1(\widetilde{T}_I, S_b)$ (cf. \eqref{eqn:fibers}),
where $I\subset [b-1]$, $S_I:=\bigoplus_{i\in I} S_i$ and
  $$
  0\to  S_I\to \widetilde{T} \to\widetilde{T}_I\to 0.
  $$
Note that from this we have an extension
$0\to  S_{I^c} \to \widetilde{T}_I \to T'\to 0$, with $I^c:=[b-1]\setminus I$.

If all strata induced in the quotient $\bar{X}_1^+/\frS_\bn$ are in $\RC$, then the main
open set $X_1^+/\frS_\bn$, which is the stratum $X^+(\bn)$ we are dealing with, lies also in $\RC$.
Let us prove that every stratum in $\bar{X}_1^+/\frS_\bn$ is motivated by $C$, thus completing the proof.

The diagonals of (\ref{eqn:un}) are labeled by partitions $\pi\leq \pi_0$ and the sub-fibrations
are labeled by sets $I\subset [b-1]$. Let $\pi_I$ be the partition $\{I, I^c\}$. The stratum
$S_{(\pi,I)}$
associated to the pair $(\pi,I)$ is constructed as follows. For each $\beta\in\pi$, let
 $$
 Y_\beta=(M^s(n_\beta,d_\beta) \times M^s(n_b,d_b) \setminus \Delta_\beta) \times \cN'
 $$
where $n_\beta=n_i$ and $d_\beta=d_i$, for any $i\in\beta$. Then we have a fibration $X^+_\pi$
over the basis
 \begin{equation}\label{eqn:aaaa}
\prod_{\substack{\beta \in\pi \\ M^s(n_b,d_b)\times\cN'}} Y_\beta
 \end{equation}
whose fiber over $((E_\beta)_{\beta\in\pi} , E_b,T')\in Y_\beta$ is
 $$
\prod_{\beta\in\pi}\PP\Ext^1(T',S_\beta)^{|\beta|}
 $$
and then a second fibration over $X^+_\pi$ with fiber $\PP\Ext^1(\widetilde{T}_I,S_b)$ over
$\widetilde T_I\in X^+_\pi$.

The group $\frS_\bn$ acts on the pairs $(\pi,I)$, with orbit $\frS_\bn \cdot (\pi,I)$ and the
corresponding stratum in the quotient is
 $$
  \left( \bigcup_{(\pi',I')\in\frS_\bn \cdot (\pi,I)} S_{(\pi',I')} \right) /\frS_\bn =
 S_{(\pi,I)} /\text{Stab}_{\frS_\bn} (\pi,I)\,
 $$
where $\text{Stab}_{\frS_\bn}(\pi,I)$ is the stabilizer of the pair $(\pi,I)$.
 Hence we have to prove that
$$
 [S_{(\pi,I)} /\text{Stab}_{\frS_\bn} (\pi,I)]\in\RC.
$$

Now, there is an exact sequence
 $$
1\to \frS_{\pi\wedge \pi_I} \to \text{Stab}_{\frS_\bn} (\pi,I) \to \frS_P \to 1 ,
 $$
where $P$ is the partition of the set $\pi$ given by
 $$
 \gamma_1 \sim \gamma_2 \iff |\gamma_1\cap I|=|\gamma_2\cap I|, \ |\gamma_1\cap I^c|=|\gamma_2\cap I^c|, \
 $$
for $\gamma_1,\gamma_2\in \pi$, and $ \frS_P=\prod_{p\in P} \frS_p$. Then
 \begin{equation}\label{eqn:bbbb}
 X^+_\pi/\frS_{\pi\wedge\pi_I}
 \end{equation}
is a fibration over (\ref{eqn:aaaa}) with fiber
  $$
 \prod_{\delta \in \pi\wedge \pi_I} \Sym^{|\delta|} \left(\PP\Ext^1(T',S_\delta)\right).
 $$

Let $\beta \in \pi$.
\begin{itemize}
 \item If $\beta\subset I$, then we have a fibration
$\Sym^{|\beta|} (\PP\Ext^1(T',S_\beta)) \to \cE_\beta \to X_\beta$.
 \item If $\beta\subset I^c$, then we have a fibration
$\Sym^{|\beta|} (\PP\Ext^1(T',S_\beta)) \to \cE_\beta \to X_\beta$.
 \item If $\beta=\beta_1\cup \beta_2$, where $\beta_1=\beta\cap I$,
$\beta_2=\beta\cap I^c$ are both non-trivial, then the fibration is
$\Sym^{|\beta_1|} (\PP\Ext^1(T',S_\beta)) \times
 \Sym^{|\beta_2|} (\PP\Ext^1(T',S_\beta))
 \to \cE_\beta \to X_\beta$.
\end{itemize}
Then (\ref{eqn:bbbb}) can be rewritten as
 $$
 \prod_{\substack{\beta\in \pi\\ B}} \cE_\beta = \prod_{\substack{p\in P\\ B}} (\cE_p)^{|p|}
 $$
(recall $B=M^s(n_b,d_b)\times \cN'$) and
 $$
 (X^+_\pi/\frS_{\pi\wedge\pi_I})/\frS_P=
 \prod_{\substack{p\in P\\ B}} \Sym^{|p|} \cE_p \, .
 $$
 So
$$
 [X^+_\pi/\text{Stab}_{\frS_\bn} ]\in \RC.
 $$

Finally, the second fibration has as fiber a projective space, and
this is a product at the level of $K$-theory.
\end{proof}

Now we move to the case
\begin{equation}\label{eqn:bn r=2 II}
\bn= \begin{pmatrix} (n_i) \\ \ba_{1} \\ \ba_{2} \end{pmatrix} =
\begin{pmatrix}
    n_1 & n_2 & \cdots & n_{b-1} & n_b \\
	1 & 1 & \cdots & 1 & 0 \\
	0 & 0 & \cdots & 0 & 1
\end{pmatrix}
\end{equation}
for some $n_i$, $b\geq 3$, and non-trivial $\frS_\bn$. This case is slightly easier than the previous one. For Theorem \ref{thm:ahora}, we only need $b=3$ and $n_i=1$, but again we do the general case.
The basis of the stratum is also given by \eqref{eqn:un}, where $\Delta$ is the big diagonal.

\begin{proposition}\label{prop:r=2.2}
Let $\sigma_c$ be any critical value.
 Let $n\geq 1$, and $\bn$ be given by \eqref{eqn:bn r=2 II}.
Suppose that
$[M^s(n'',d'')]$ and $[\cN_{\sigma_c}^s(n',1,d',d_o)]$ are in $\RC$, for any $n',n''<n$,
$\gcd(n'',d'')=1$. Assume also that $\gcd(n_i,d_i)=1$, for every $i=1,\ldots,b$. Then  $[X^\pm(\bn)]\in \RC$.
\end{proposition}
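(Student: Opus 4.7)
The proof closely mimics that of Proposition \ref{prop:r=2.1}, with the two extension steps in reversed order and in fact slightly simpler. By Proposition \ref{prop:Xbn}, a triple $T\in X^+(\bn)$ is built from a $T'\in \cN':=\cN^s_{\s_c}(n',1,d',d_o)$ by first forming an extension
\[
0\to S_b\to \bT_2\to T'\to 0
\]
whose class, after condition (1) and the $\GL(\ba_2)=\CC^*$-quotient, lies in $\PP\Ext^1(T',S_b)$, and then forming
\[
0\to S_1\oplus\cdots\oplus S_{b-1}\to T\to \bT_2\to 0
\]
with class $\xi_1=(\xi_{1i})_{i<b}$ whose projectivization lives in $\prod_{i<b}\PP\Ext^1(\bT_2,S_i)$. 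Because $\ba_2=(0,\ldots,0,1)$, condition (2) at $j=1$ amounts to the single inequation $\xi_1'\neq 0$ in $\Ext^1(S_b,S_1\oplus\cdots\oplus S_{b-1})$; using the short exact sequence
\[
0\to \Ext^1(T',S_i)\to \Ext^1(\bT_2,S_i)\to \Ext^1(S_b,S_i)\to 0
\]
(valid since $\Hom(S_b,S_i)=\Ext^2(T',S_i)=0$ for $E_b\not\cong E_i$), the failure locus is exactly the closed subspace $\prod_{i<b}\PP\Ext^1(T',S_i)\subset \prod_{i<b}\PP\Ext^1(\bT_2,S_i)$. So only a single subspace is removed, rather than a union of $b-1$ subspaces as in Proposition \ref{prop:r=2.1}.

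Next, I would enlarge the basis as in Proposition \ref{prop:r=2.1}: with $B=M^s(n_b,d_b)\times \cN'$, $Y_i=(M^s(n_i,d_i)\times M^s(n_b,d_b)\setminus \Delta_i)\times \cN'$, and $\bar X_0^+ = \prod_{1\leq i\leq b-1,\,B} Y_i$, only the diagonals $E_i\cong E_b$ are removed, so all relevant $\Ext^1$-dimensions stay constant. Coprimality of the $(n_i,d_i)$ produces universal bundles and Zariski local triviality at both stages: the first stage gives $\bar X_1^+=\bar X_0^+\times_B \cE_b$, where $\cE_b\to B$ is the $\PP\Ext^1(T',S_b)$-bundle, and the second gives the compactified $\bar X_2^+$ as the fiber product over $\bar X_1^+$ of $\PP\Ext^1(\bT_2,S_i)$-bundles $\cE_i$. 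The sub-bundle $\bar Z^+\subset \bar X_2^+$ realising the condition-(2) failure locus has fiber $\prod_{i<b}\PP\Ext^1(T',S_i)$ and, crucially, is pulled back from a product of projective bundles over $\prod_{i<b}M^s(n_i,d_i)\times \cN'$: it does not feel the $\cE_b$ direction. This is the structural simplification that makes the single-subspace subtraction clean.

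The group $\frS_\bn$ permutes the $Y_i$'s and $\cE_i$'s, and here $\pi_1$ coincides with the partition $\pi_0$ of $[b-1]$ by $i\sim j\iff n_i=n_j$. Hence both $\bar X_2^+/\frS_\bn$ and $\bar Z^+/\frS_\bn$ decompose, via the symmetric-product construction of Propositions \ref{prop:r=1} and \ref{prop:r=2.1}, as fiber products over $B$ of $\Sym^{|\alpha|}\cE_\alpha$-type terms, so both classes lie in $\RC$. To recover $X^+(\bn)$ from $(\bar X_2^+\setminus \bar Z^+)/\frS_\bn$, I would strip off the big diagonal $\Delta$ among $E_1,\ldots,E_{b-1}$ by stratifying by partitions $\pi\leq \pi_0$ and using the stabilizer exact sequence $1\to \frS_{\pi\wedge\pi_1}\to \text{Stab}_{\frS_\bn}(\pi)\to \frS_P\to 1$ exactly as in Proposition \ref{prop:r=1}, showing each diagonal stratum is also in $\RC$. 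The $X^-(\bn)$ case follows by the dual construction, exchanging sub- and quotient-triples. The main technical point is carrying out the bad-locus subtraction $\frS_\bn$-equivariantly and compatibly with the diagonal stratification; since $\bar X_2^+$ and $\bar Z^+$ are built from the same families of $\cE_\gamma$-type projective bundles (differing only in the projectivized $\Ext^1$ that is the fiber), this reduces to reapplying the symmetric-product toolkit of the preceding propositions.
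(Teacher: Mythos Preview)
Your proposal is correct and follows the same underlying idea as the paper, but the paper's proof is considerably more economical. The paper observes that the first-step fibre $\PP\Ext^1(T',S_b)$ depends only on $(E_b,T')$ and is therefore $\frS_\bn$-invariant; hence, after forming $X_1^+$, the second step is \emph{literally} an instance of the $r=1$ situation of Proposition~\ref{prop:r=1}, with the role of $\cN'$ played by $X_1^+$ (i.e.\ by the space of $\widetilde{T}$'s). Both the compactified fibre $\prod_{i<b}\PP\Ext^1(\widetilde{T},S_i)$ and the removed sub-fibre $\prod_{i<b}\PP\Ext^1(T',S_i)$ are then handled by a direct appeal to that earlier argument, with no need to re-run the enlargement of the basis, the partition/stabilizer bookkeeping, or the diagonal stratification. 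Your route---importing the full machinery of Proposition~\ref{prop:r=2.1} (enlarged basis $\bar X_0^+$, symmetric-product quotient, stratification by $(\pi,I)$)---works, but it duplicates effort: the only subspace removed is a single product, so no $\pi_I$-type partition is needed, and since the first step is transparent to $\frS_\bn$ the whole quotient analysis collapses to Proposition~\ref{prop:r=1} verbatim. The payoff of the paper's observation is a two-line reduction; the payoff of yours is a self-contained argument independent of Proposition~\ref{prop:r=1}.
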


\begin{proof}
We only work out $X^+(\bn)$.
 By Proposition \ref{prop:Xbn}, we have to
construct a two-step iterated fibration, with base $\tilde{U}(\bn)\times\cN'$
where $\cN'=\cN_{\s_c}^s(n',1,d',d_o)$, and then take the quotient
by the symmetric group $\frS_\bn$ which is a subgroup of the permutation
group on the first $b-1$ factors of (\ref{eqn:un}).

The first step is a bundle $X_1^+ \to \tilde{U}(\bn)\times\cN'$ with fibers
 $$
 \PP \Ext^1(T',S_b),
 $$
and the second step is a bundle $X_2^+ \to X_1^+$ whose fibers are the spaces
 $$
 \prod_{i=1}^{b-1} \PP \Ext^1(\widetilde{T},S_i)\setminus \prod_{i=1}^{b-1} \PP \Ext^1(T',S_i),
 $$
where $\widetilde{T}$ is the triple corresponding to the point in $X_1^+$
determined by the extension
 $$
 0 \to S_b \to \widetilde{T} \to T' \to 0\, .
 $$
We compactify the fibration to a fibration with fibers
$$
 \prod_{i=1}^{b-1} \PP \Ext^1(\widetilde{T},S_i).
$$
The quotient by $\frS_\bn$ is done in exactly the same fashion as that carried out
in Section \ref{sec:r=1}, changing the role of $\cN'$ to that of $X_1^+$.

The other stratum is worked out in the same fashion, as a fibration
over $X_1^+$ with fiber
$$
 \prod_{i=1}^{b-1} \PP \Ext^1(T',S_i).
$$
So finally $[X_2^+/\frS_\bn]=[X^+(\bn)]\in \RC$.
\end{proof}

\section{Some cases with non-coprime rank and degree}

So far, we have analysed cases where $\gcd(n_i,d_i)=1$, for all $i=1, \ldots, b$.
If $\gcd(n,d)=1$, then there exists a universal bundle $\cE \to M^s(n,d) \times C$,
and hence the projective bundle $\cU_m \to M^s(n,d)$ is locally trivial in the
Zariski topology.
In the case where $\gcd(n,d)>1$ such a universal bundle does not exist, and we have
to circumvent the situation in another way.

\begin{proposition}\label{prop:non-coprime2}
 Let $\sigma_c$ be any critical value and let $\bn$ be a type with $r=1$, $\gcd(n_1,d_1)>1$, $a_1=1$, and $\gcd(n_i,d_i)=1$, for $i=2, \ldots,b$.
 Assume that $\frS_\bn=\{1\}$.
 Define $\cD_m(\tilde n,1,\tilde d,d_o)$ as in \eqref{eqn:Dm}. Suppose that
$[M^s(n'',d'')]$, $[\cN_{\sigma_c}^s(n',1,d',d_o)]$ and $[\cD_m(\tilde n,1,\tilde d,d_o)]$ are in $\RC$, for any $n',n'',\tilde n<n$, $\gcd(n'',d'')=1$.
 Then $[X^\pm(\bn)]\in \RC$.
\end{proposition}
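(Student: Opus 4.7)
The obstacle compared with Proposition \ref{prop:no-finite-group} is that when $\gcd(n_1,d_1)>1$ there is no universal bundle on $M^s(n_1,d_1)\times C$. My plan is to replace $M^s(n_1,d_1)$ by the larger moduli $\cU_m':=\cU_m(n_1,1,d_1,d_o)$, which carries a universal rank-$n_1$ bundle $\cE_1$ inherited from the universal triple over $\cN_{\smp}(n_1,1,d_1,d_o)$. Choosing $d_o$ small enough that $d_1/n_1-d_o>2g-2$ (so that $H^1(E_1\otimes L^{-1})=0$ for every polystable $E_1$ of the relevant type and every $L$ of degree $d_o$), the projection $\cU_m'\to M^s(n_1,d_1)\times\Jac^{d_o}C$ is a Zariski locally trivial $\PP^N$-bundle of constant relative dimension $N$ (Riemann-Roch plus Grauert). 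From the stratification $\cN_{\smp}(n_1,1,d_1,d_o)=\cU_m'\sqcup\cD_m(n_1,1,d_1,d_o)$, together with the hypothesis $[\cD_m(n_1,1,d_1,d_o)]\in\RC$ and the inductive knowledge (on $n$) that $[\cN_{\smp}(n_1,1,d_1,d_o)]\in\RC$, one deduces $[\cU_m']\in\RC$.

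I then form
$$
W=\cU_m'\times\Bigl(\prod_{i=2}^b M^s(n_i,d_i)\setminus\Delta'\Bigr)\times\cN',
$$
where $\Delta'$ is the big diagonal among the factors indexed by $i\geq 2$. No diagonal involving $E_1$ needs removal: equality $E_1\cong E_j$ for some $j\geq 2$ would force $n_1=n_j$ and $d_1=d_j$ (since all $d_i/n_i$ equal $\mu_c$), contradicting $\gcd(n_1,d_1)>1=\gcd(n_j,d_j)$. Over $W$ every factor carries a universal bundle, so the iterated extension construction of Proposition \ref{prop:no-finite-group} yields a Zariski locally trivial fibration $Y\to W$ with fiber $F$ of class $P(\LL)/[\GL(\bn)]$. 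Inductively $[M^s(n_i,d_i)]\in\RC$ for $i\geq 2$ and $[\cN']\in\RC$, whence $[Y]=[W]\cdot[F]\in\RC$.

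The construction of $Y$ depends only on the bundles $E_1,\ldots,E_b$ and on $T'$, not on the auxiliary $(L,\phi)$ over $E_1$, so there is a natural forgetful morphism $Y\to\tilde X^+(\bn)$ whose fibers are all copies of $\cU_m'|_{E_1}$, a $\PP^N$-bundle over $\Jac^{d_o}C$ of constant class $[\Jac C]\cdot[\PP^N]$ (the pushforward to $\Jac$ being a genuine vector bundle for each fixed $E_1$). This gives
$$
[Y]=[\tilde X^+(\bn)]\cdot[\Jac C]\cdot[\PP^N].
$$
To conclude, I would invert $[\Jac C]\cdot[\PP^N]$ inside $\RC$: the leading $\LL$-term is $\LL^{g+N}$, so writing $[\Jac C]\cdot[\PP^N]=\LL^{g+N}(1+x)$ with $\dim x\leq -1$ realizes the inverse as the convergent geometric series $\LL^{-g-N}\sum_{k\geq 0}(-x)^k\in\wRC$ by the completion axiom (condition (4) of the definition of $\wRC$). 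Hence $[\tilde X^+(\bn)]\in\RC$, and since $\frS_\bn=\{1\}$ this gives $[X^+(\bn)]\in\RC$; the case $X^-(\bn)$ is handled identically via the dual construction of Proposition \ref{prop:Xbn-}. The step I expect to be most delicate is verifying that the fibration $Y\to\tilde X^+(\bn)$ really has the anticipated class $[\tilde X^+(\bn)]\cdot[\Jac C]\cdot[\PP^N]$: because $M^s(n_1,d_1)$ supports only a twisted universal bundle, the descent of the projective-bundle structure of $\cU_m'$ through the base change to $\tilde X^+(\bn)$ must be handled carefully (for instance, by a stratification on which the Brauer-Severi structure acquires a section).
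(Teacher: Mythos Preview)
Your overall strategy coincides with the paper's: form the fibre product of the stratum with $\cU_m$ over $M^s(n_1,d_1)$, exploit that one side carries a universal bundle, and then cancel the extra factor. The paper first reduces to $b=1$ (using Zariski local triviality of the $i\ge 2$ factors, as you implicitly do) and, fixing $T'$, forms $\cA=\cU_m\times_{M^s(n_1,d_1)}X^+_{T'}$; this is exactly your $Y$ after stripping the Jacobian and the $i\ge 2$ Grassmannians.

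There is, however, a genuine gap at the point you yourself flag. Your assertion that $\cU_m'\to M^s(n_1,d_1)\times\Jac^{d_o}C$ is a \emph{Zariski} locally trivial $\PP^N$-bundle is not justified: Riemann--Roch and Grauert would give local freeness of a pushforward, but there is no global family $\cE_1$ over $M^s(n_1,d_1)\times C$ to push forward when $\gcd(n_1,d_1)>1$. This projective fibration carries a nontrivial Brauer class, which is precisely the obstruction. Consequently the pull-back $Y\to\tilde X^+(\bn)\times\Jac$ is only an \'etale-locally trivial $\PP^N$-bundle, and ``constant fibre class'' alone does not yield the product identity $[Y]=[\tilde X^+(\bn)]\cdot[\Jac C]\cdot[\PP^N]$ in $K(\Mot)$. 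Your proposed fix (stratify until a section appears) is not carried out and is not obviously finite or functorial.

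The paper closes this gap by computing Brauer classes. It shows, via the argument of \cite{Brauer}, that $\mathrm{cl}(\cU_m)=\pm\,\mathrm{cl}(X^+_{T'})$ in $\mathrm{Br}(M^s(n_1,d_1))$. Then Gabber's exact sequence
\[
\ZZ\cdot\mathrm{cl}(\cU_m)\to\mathrm{Br}(M^s(n_1,d_1))\xrightarrow{f^*}\mathrm{Br}(\cU_m)\to 0
\]
forces the pulled-back class $f^*\mathrm{cl}(X^+_{T'})$ to vanish, so $\cA\to\cU_m$ is Zariski locally trivial; symmetrically $\cA\to X^+_{T'}$ is too. This gives $[\cA]=[\cU_m]\cdot[\PP^a]=[X^+_{T'}]\cdot[\PP^b]$, hence $[X^+_{T'}]=[\cU_m]\cdot[\PP^a]\cdot[\PP^b]^{-1}\in\RC$. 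In short, the missing ingredient in your argument is exactly the identification of the two Brauer classes and the use of Gabber's theorem to trivialize both projections of the fibre product; once you insert that, your approach and the paper's are the same.
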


\begin{proof}
We deal with the case $X^+ (\bn)$, the other one being similar.
The space $X^+(\bn)$ is a fibration over
 $$
  \cM(\bn)= M^s(n_1,d_1) \x \left( \prod_{i=2}^b M^s(n_i,d_i) \setminus \Delta \right) \x \cN'\, ,
 $$
$\Delta$ denoting a suitable diagonal, with fiber
 $$
\PP\Ext^1(T',S_1) \x \prod_{i=2}^b \Gr(a_i, \Ext^1(T',S_i))\, .
 $$
As $\gcd(n_i,d_i)=1$, for $i\geq 2$, we know that there are universal bundles over $M^s(n_i,d_i)$.
The same happens for $\cN'= \cN_{\s_c}(n',1,d',d_o)$.
So the bundle over $\cM(\bn)$ with fiber $\prod_{i=2}^b \Gr(a_i, \Ext^1(T',S_i))$ is
Zariski locally trivial, and hence the motives are multiplicative.
So we shall assume from now on that $b=1$.

For $T'\in \cN'$, we consider the subset $X^+_{T'}\subset X^+(\bn)$ 
corresponding to fixing $T'$. Clearly $[X^+(\bn)]=[X^+_{T'}]\, [\cN']$,
so we have to see that $X^+_{T'}$ is motivated by $C$.

The map $X^+_{T'} \to M^s(n_1,d_1)$ is a fibration with fiber $\PP\Ext^1(T',S_1)$.
This projective fibration defines a Brauer class 
 \begin{equation*}
 \mathrm{cl}(X^+_{T'}) \in \mathrm{Br}(M^s(n_1,d_1)).
 \end{equation*}
Now consider the fibration $\cU_m =\cU_m(n_1,1,d_1, L_e)\to M^s(n_1,d_1)$,
for some line bundle $L_e$ with $\deg L_e = e \ll 0$. It has fiber $\PP \Hom(L_e,L_1)$.
The same argument as in the proof of Proposition 3.2 in \cite{Brauer} shows that the
Brauer class
 \begin{equation*}
 \mathrm{cl}(\cU_m) \in \mathrm{Br}(M^s(n_1,d_1))
 \end{equation*}
satisfies 
 \begin{equation}\label{eqn:arreglo}
 \mathrm{cl}(\cU_m) = \pm \, \mathrm{cl}(X^+_{T'}).
 \end{equation}

Now consider the pull-back diagram 
 $$
\xymatrix{ \cA \ar[r]\ar[d] & X^+_{T'}\ar[d]^{g} \\
 \cU_m\ar[r]_(.3){f} & M^s(n_1,d_1)\, ,
}
 $$
where all maps are projective fibrations.
Consider the fibration $X^+_{T'} \to M^s(n_1,d_1)$. It has Brauer 
class $c_1=\mathrm{cl}(X^+_{T'}) \in\mathrm{Br}(M^s(n_1,d_1))$.
Then the fibration $\cA\to\cU_m$ is the pull-back under $f:\cU_m \to M^s(n_1,d_1)$, so it has Brauer class 
$f^*c_1 \in \mathrm{Br}(\cU_m)$.  Now, by \cite[p. 193]{Gabber}, there is an exact sequence
    $$\ZZ\cdot\mathrm{cl}(\cU_m)\to\mathrm{Br}(M^s(n_1,d_1))\xrightarrow{f^*}\mathrm{Br}(\cU_m)\to 0.$$
From this, and using (\ref{eqn:arreglo}), it follows that $f^*c_1=0$. This implies that $\cA\to\cU_m$ is Zariski locally trivial.
Similarly, using the pull-back under $g$, the Brauer class of $\cA \to X^+_{T'}$ is also trivial, so the fibration 
is Zariski locally trivial as well. 

The above implies that the motives satisfy $[\cA]=[\cU_m] \, [\PP^a]=[X^+_{T'}] \, [\PP^b]$, for some $a,b\geq 0$.
Hence
 $$
 [X^+_{T'}]=[\cU_m] \, [\PP^a]\, [\PP^b]^{-1} \, .
 $$
By our assumptions on $\cD_m$ and since $n_1<n$, it follows that $[\cU_m]\in \RC$. Hence
$[X^+_{T'}]\in\RC$, as required.

\end{proof}

Consider now the case
\begin{equation}\label{eqn:bn r=2 non-coprime}
\bn= \begin{pmatrix} (n_i) \\ \ba_{1} \\ \ba_{2} \end{pmatrix} =
\begin{pmatrix}
    n_1 & n_2 & \cdots & n_b \\
	0 & 1 & \cdots & 1 \\
	1 & 0 & \cdots & 0
\end{pmatrix}.
\end{equation}

\begin{proposition}\label{prop:non-coprime3}
 Let $\sigma_c$ be any critical value and let $\bn$ be given by \eqref{eqn:bn r=2 non-coprime},
 such that $\gcd(n_1,d_1)>1$ and $\gcd(n_i,d_i)=1$, for $i=2, \ldots,b$.
 Assume that $\frS_\bn=\{1\}$.
 Suppose that $[M^s(n'',d'')]$ and $[\cN_{\sigma_c}^s(n',1,d',d_o)]$ are in $\RC$, for any $n',n''<n$, $\gcd(n'',d'')=1$.
 Then $[X^\pm(\bn)]\in \RC$.
\end{proposition}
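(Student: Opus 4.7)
The plan is to combine the two-step iterated-fibration description of Proposition \ref{prop:r=2.2} (which treats the coprime analogue of the present type) with the Brauer-class pull-back trick of Proposition \ref{prop:non-coprime2} (which handles the lack of a universal bundle over $M^s(n_1,d_1)$). I will only treat $X^+(\bn)$; the case $X^-(\bn)$ is analogous. Set $B_1 = M^s(n_1,d_1)$, $Y = \prod_{i=2}^b M^s(n_i,d_i) \setminus \Delta$, and $\cN' = \cN_{\sigma_c}^s(n',1,d',d_o)$.

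By Proposition \ref{prop:Xbn} and the assumption $\frS_\bn=\{1\}$, $X^+(\bn)$ is built as a two-step iterated fibration over $\cM(\bn) = B_1 \times Y \times \cN'$: first a $\PP\Ext^1(T',S_1)$-bundle $X_1 \to \cM(\bn)$ encoding the extension $0\to S_1\to\bT_2\to T'\to 0$, and then $X^+(\bn)\to X_1$ whose fibre is the open subset $\prod_{i=2}^b \PP\Ext^1(\bT_2,S_i)\setminus \prod_{i=2}^b \PP\Ext^1(T',S_i)$ encoding the extension $0\to S_2\oplus\cdots\oplus S_b\to T\to\bT_2\to 0$, exactly as in the proof of Proposition \ref{prop:r=2.2}. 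Since $\gcd(n_i,d_i)=1$ for $i\geq 2$ and there is a universal triple on $\cN'\times C$, universal bundles exist on every factor except $B_1$; the resulting obstruction is captured by a single Brauer class $c\in \mathrm{Br}(B_1)$.

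I would then introduce $\cU_m = \cU_m(n_1,1,d_1,L_e)$ for a line bundle $L_e$ of sufficiently negative degree, a projective bundle over $B_1$ with $\mathrm{cl}(\cU_m) = \pm c$ by the argument of \cite{Brauer} reused in Proposition \ref{prop:non-coprime2}. Form the pull-back
$$
\xymatrix{
\cA \ar[r]\ar[d]_{g} & X^+(\bn) \ar[d] \\
\cU_m \times Y \times \cN' \ar[r]_(.66){f} & \cM(\bn).
}
$$
Since the Brauer class of $X_1\to\cM(\bn)$ also equals $\pm c$, Gabber's exact sequence gives $f^*c=0$, so the first step of the iterated fibration becomes Zariski locally trivial after pull-back; a tautological $\bT_2$ on $\cA$ (obtained from the universal extension over $\cU_m$) then renders the second step Zariski locally trivial as well. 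Thus $[\cA] = [\cU_m]\cdot[Y]\cdot[\cN']\cdot P(\LL)$ for some Laurent polynomial $P$ read off from the two fibre dimensions. On the other hand $g$ is the pull-back of the projective bundle $\cU_m\to B_1$, hence also Zariski locally trivial, giving $[\cA] = [X^+(\bn)]\cdot[\PP^k]$ for some $k\geq 0$. Combining yields $[X^+(\bn)] = [\cU_m]\cdot[Y]\cdot[\cN']\cdot P(\LL)\cdot[\PP^k]^{-1}$, and $[\cU_m]\in\RC$ from the identity $[\cU_m] = [\cN_{\sigma_m^+}(n_1,1,d_1,d_o)] - [\cD_m(n_1,1,d_1,d_o)]$ together with the inductive hypotheses on $\cN_{\sigma_c}^s$ and on $\cD_m$ (the latter entering here exactly as in the hypotheses of Proposition \ref{prop:non-coprime2}).

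The main obstacle will be the second step: verifying that after pulling back to $\cU_m \times Y \times \cN'$ there is a globally defined $\bT_2$ (up to a harmless line-bundle twist) that trivialises the second-stage extension bundle $\prod_i\PP\Ext^1(\bT_2,S_i)$. Equivalently, one must check that the total Brauer class of the iterated fibration $X^+(\bn)\to\cM(\bn)$ is a multiple of the same class $c$, so that pulling back to $\cU_m$ — which annihilates $c$ — kills the whole obstruction simultaneously. This is the $r=2$ refinement of the $r=1$ Brauer computation carried out in Proposition \ref{prop:non-coprime2}.
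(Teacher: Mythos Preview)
Your approach works but is more elaborate than the paper's. The paper treats the two steps separately: $X_1^+\to\cM(\bn)$ with fibre $\PP\Ext^1(T',S_1)$ is exactly an instance of Proposition~\ref{prop:non-coprime2}, so $[X_1^+]\in\RC$ directly; for the second step the paper observes that the triples $\bT_2$ parametrised by $X_1^+$ are themselves $\sigma_c^+$-\emph{stable}, so $X_1^+$ sits inside a moduli space of stable triples and therefore carries a universal $\bT_2$ over $X_1^+\times C$. Combined with the universal $T'$ on $\cN'$ and the universal $E_i$ on $M^s(n_i,d_i)$ for $i\geq2$, this makes $X^+(\bn)\to X_1^+$ Zariski locally trivial over $X_1^+$ itself, giving $[X^+(\bn)]=[X_1^+]\cdot[\text{fibre}]\in\RC$ with no further Brauer-class work. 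Your route---pulling back the entire iterated fibration along $\cU_m\times Y\times\cN'\to\cM(\bn)$ and trivialising both steps at once---does succeed (the tautological $\bT_2$ over the pulled-back first stage exists, built from the universal $E_1$ on $\cU_m$ and the tautological line in the projectivised $\Ext^1$), but the ``main obstacle'' you flag dissolves once you notice that $\bT_2$ is stable, so a universal family already lives on $X_1^+$. Two minor points: in your diagram $g$ labels the left vertical arrow, yet your text treats it as the top horizontal one (the pull-back of $\cU_m\to B_1$); and Zariski local triviality of that horizontal arrow requires a second application of Gabber's sequence (the class $c$ dies upon pull-back through $X_1\to\cM(\bn)$), which you assert without argument.
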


\begin{proof}
Since $r=2$, we have two steps. The first step $X_1^+$ sits as the total space of a fibration whose basis is
 $$
  \cM(\bn)= M^s(n_1,d_1) \x \left( \prod_{i=2}^b M^s(n_i,d_i) \setminus \Delta \right) \x \cN'\, ,
  $$
where $\cN'= \cN_{\sigma_c}^s(n',1,d',d_o)$, and whose fiber is
  $$
  \PP\Ext^1(T',S_1).
  $$
Then $X_1^+$ is a space in the situation covered by Proposition \ref{prop:non-coprime2}, hence it lies in $\RC$.
The second step is a bundle $X_2^+\to X_1^+$ whose fiber is
\begin{equation}\label{eqn:fiberstep2}
 \prod_{i=2}^{b} \PP \Ext^1(\widetilde{T},S_i)\setminus \prod_{i=2}^{b} \PP \Ext^1(T',S_i),
\end{equation}
 where $\widetilde{T}$ is given by the extension
 $ 0 \to S_1 \to \widetilde{T} \to T' \to 0$.
There is a universal bundle parametrizing triples over $\cN'$, another one
parametrizing triples $\widetilde{T}$ (because the triples in
$X_1^+$ are all $\sigma_c^+$-stable), and another
universal bundle over $M^s(n_i,d_i)$, for $i=2,\ldots, b$.
So the bundle over $X_2^+\to X_1^+$ with fiber \eqref{eqn:fiberstep2} is
Zariski locally trivial, and hence the motives are multiplicative.
So $[X^+(\bn)]=[X_2^+]\in \RC$.

The case of $[X^-(\bn)]$ is similar.
\end{proof}

Finally, we have to look also to the case
\begin{equation}\label{eqn:bn r=2 non-coprime II}
\bn= \begin{pmatrix} (n_i) \\ \ba_{1} \\ \ba_{2} \end{pmatrix} =
\begin{pmatrix}
    n_1 & n_2 & \cdots & n_b \\
	1 & 0 & \cdots & 0 \\
	0 & 1 & \cdots & 1
\end{pmatrix}.
\end{equation}

\begin{proposition}\label{prop:non-coprime4}
 Let $\sigma_c$ be any critical value and let $\bn$ be given by \eqref{eqn:bn r=2 non-coprime II},
 such $\gcd(n_1,d_1)>1$ and $\gcd(n_i,d_i)=1$, for $i=2, \ldots,b$.
 Assume that $\frS_\bn=\{1\}$.
 Suppose that $[M^s(n'',d'')]$ and $[\cN_{\sigma_c}^s(n',1,d',d_o)]$ are in $\RC$, for any $n',n''<n$, $\gcd(n'',d'')=1$.
 Then $[X^\pm(\bn)]\in \RC$.
\end{proposition}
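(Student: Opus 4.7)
The plan is to mirror Proposition \ref{prop:non-coprime3} but with the non-coprime obstruction appearing in the \emph{second} rather than the first step of the iterated extension. I describe $X^+(\bn)$ via Proposition \ref{prop:Xbn} with $\ba_1=(1,0,\ldots,0)$ and $\ba_2=(0,1,\ldots,1)$ (the case $X^-(\bn)$ being analogous via Proposition \ref{prop:Xbn-}), and reduce the non-coprime issue to a Brauer-class comparison as in \cite[Proposition~3.2]{Brauer} combined with Gabber's sequence \cite[p.~193]{Gabber}.

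First, the base $\cM(\bn) = M^s(n_1,d_1) \times \prod_{i=2}^{b} M^s(n_i,d_i) \times \cN'$ (with $\cN' = \cN_{\s_c}^s(n',1,d',d_o)$) carries no diagonals, since $\frS_\bn=\{1\}$ and $(n_1,d_1)$ is distinguished from the coprime pairs. Step~1 produces a fibration $X_1^+ \to \cM(\bn)$ with fiber $\prod_{i=2}^{b}\PP\Ext^1(T',S_i)$. Since only coprime-rank bundles and $T'$ enter (each carrying a universal family), this is Zariski locally trivial; and because the fiber is independent of $S_1$, I obtain a product decomposition $X_1^+ \cong M^s(n_1,d_1)\times Y_1$ with $[Y_1]\in \RC$. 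I will not compute $[X_1^+]$ on its own, as $[M^s(n_1,d_1)]$ is unavailable.

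The crucial step is the second. I consider the fibration $X_2^+\to X_1^+$ whose fiber at $\bT_2$ is the open complement in $\PP\Ext^1(\bT_2,S_1)$ of the loci $\PP K_i$, where $K_i\subset \Ext^1(\bT_2,S_1)$ is the kernel of the projection to $\Ext^1(S_i,S_1)$ induced by $0 \to S_2\oplus\cdots\oplus S_b\to \bT_2\to T'\to 0$; the $\Ext^0/\Ext^2$ vanishing from \cite{Mu-HS} makes the $K_i$ and all their intersections sub-bundles of constant rank. I compactify to $\bar X_2^+\to X_1^+$ with fiber $\PP\Ext^1(\bT_2,S_1)$. Writing $g\colon X_1^+\to M^s(n_1,d_1)$ for the projection, the key claim (by the argument of \cite[Proposition~3.2]{Brauer}, since $\bT_2$ admits a universal family on $X_1^+$ by $\s_c^+$-stability and the only universal-bundle obstruction comes from $S_1$) is that $\mathrm{cl}(\bar X_2^+)=\pm g^*\mathrm{cl}(\cU_m)$ in $\mathrm{Br}(X_1^+)$, where $\cU_m = \cU_m(n_1,1,d_1,L_e)$ for a line bundle $L_e$ of degree $\ll 0$. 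I then form $\cA = \bar X_2^+\times_{X_1^+} g^*\cU_m$ and apply Gabber's sequence to both projective bundles involved to conclude that $\cA\to \bar X_2^+$ and $\cA\to g^*\cU_m$ are Zariski locally trivial, yielding $[\bar X_2^+]\,[\PP^a] = [g^*\cU_m]\,[\PP^b]$ in $K(\Mot)$. Since $g^*\cU_m = \cU_m\times Y_1$ with $[\cU_m]\in \RC$ (by the inductive setup used in Propositions \ref{prop:non-coprime2} and \ref{prop:non-coprime3}), this gives $[\bar X_2^+]\in\RC$.

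To finish, I recover $[X^+(\bn)]$ by inclusion-exclusion on the stratification $\bar X_2^+\setminus X_2^+ = \bigcup_{i=2}^{b}\bar X_{2,i}^+$, where $\bar X_{2,i}^+\to X_1^+$ is the projective sub-bundle with fiber $\PP K_i$. Every intersection $\bar X_{2,I}^+$ with $I\subseteq\{2,\ldots,b\}$ is again a projective sub-bundle of constant rank, inheriting the Brauer class $\pm g^*\mathrm{cl}(\cU_m)$; the Step~2 argument applies uniformly and gives $[\bar X_{2,I}^+]\in \RC$, and a signed sum produces $[X^+(\bn)]\in\RC$. The main obstacle I anticipate is the uniform identification, across all of the sub-bundles $\bar X_{2,I}^+$, of their Brauer classes with $\pm g^*\mathrm{cl}(\cU_m)$; once this is granted, the rest of the argument is a faithful transcription of the machinery already deployed for Propositions \ref{prop:non-coprime2} and \ref{prop:non-coprime3}.
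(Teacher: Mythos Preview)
Your approach is correct and is, in fact, a cleaner execution of what the paper is aiming at. Both arguments ultimately rest on transplanting the Brauer--class comparison of Proposition~\ref{prop:non-coprime2} to the step where the non-coprime bundle $S_1$ actually enters, and then peeling off the projective sub-fibrations by inclusion--exclusion.

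The difference is one of bookkeeping. The paper asserts that ``$X_1^+$ is a space in the situation covered by Proposition~\ref{prop:non-coprime2}, hence it lies in $\RC$'' and then that ``$X_2^+\to X_1^+$ is Zariski locally trivial''. Taken literally, neither claim is justified: $X_1^+\cong M^s(n_1,d_1)\times Y_1$ and $[M^s(n_1,d_1)]$ is \emph{not} among the hypotheses (nor is it produced by the induction when $\gcd(n_1,d_1)>1$); and the fibration $X_2^+\to X_1^+$ involves $S_1$, so Zariski local triviality would require a universal bundle on $M^s(n_1,d_1)$, which is exactly what fails. (There are also evident typos in the paper's display of the second-step fiber, with $S_b$ written where $S_1$ is meant.) You avoid these pitfalls by refusing to compute $[X_1^+]$ on its own and instead running the Brauer argument directly on $\bar X_2^+$ over $X_1^+=M^s(n_1,d_1)\times Y_1$, comparing with $g^*\cU_m=\cU_m\times Y_1$. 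That is precisely the move that makes the argument go through.

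Two small remarks. First, your anticipated ``main obstacle'' is not one: a projective sub-bundle $\PP(W)\subset\PP(V)$ of a Brauer-twisted projective bundle carries the same Brauer class, so each $\bar X_{2,I}^+$ inherits $\pm g^*\mathrm{cl}(\cU_m)$ automatically and the inclusion--exclusion goes through verbatim. Second, you appeal to $[\cU_m(n_1,1,d_1,e)]\in\RC$; strictly speaking this is not contained in the hypotheses \emph{as stated} in the proposition (the same gap is already present in the paper's Proposition~\ref{prop:non-coprime3} when it invokes Proposition~\ref{prop:non-coprime2}), but it is available in the inductive application of Theorem~\ref{thm:ahora}, so no harm is done.
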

\begin{proof}
Since $r=2$, we have two steps. The first step $X_1^+$ sits as the total space of a fibration whose basis is
 $$
  \cM(\bn)= M^s(n_1,d_1) \x \left( \prod_{i=2}^b M^s(n_i,d_i) \setminus \Delta \right) \x \cN'\, ,
  $$
where $\Delta$ is the suitable diagonal, $\cN'= \cN_{\sigma_c}^s(n',1,d',d_o)$, and whose fiber is
  $$
\prod_{i=2}^{b} \PP \Ext^1(T',S_i).
  $$
Then $X_1^+$ is a space in the situation covered by Proposition \ref{prop:non-coprime2}, hence it lies in $\RC$.

The second step is a bundle $X_2^+\to X_1^+$. If $\widetilde T\in X_1^+$ is given by $$0\to S_2\oplus\cdots\oplus S_b\to\widetilde{T}\to T'\to 0,$$ then the fiber of $X_2^+\to X_1^+$ over $\widetilde{T}$ is
\begin{equation} \label{eqn:fibers non-coprime}
 \PP \Ext^1(\widetilde{T},S_b) \setminus \bigcup_{i=1}^{b-1} \PP \Ext^1(\widetilde T_i, S_b)
 \end{equation}
where, for each $i$, $\widetilde T_i$ is the triple fitting in the natural exact sequence
 $$
 0\to S_i \to\widetilde{T}\to\widetilde{T}_i\to 0
 $$
so that
 $$
 0\to\Ext^1(\widetilde T_i,S_b)\to
 \Ext^1(\widetilde T,S_b)\to \Ext^1(S_i,S_b)\to 0.
 $$
Now, concerning the fibration $X_2^+\to X_1^+$, we are in a situation similar
to that of Proposition \ref{prop:non-coprime2}, the only difference being that
instead of having a projective fibration, we have a projective fibration minus
some sub-fibrations (which are also projetive). So, we conclude again that
$X_2^+\to X_1^+$ is Zariski locally trivial, and since $X_1^+$ is motivated
by $C$, we deduce using \eqref{eqn:fibers non-coprime} that $[X^+(\bn)]=[X_2^+]\in \RC$.

The case of $[X^-(\bn)]$ is similar.
\end{proof}


\section{Proof of the main results}

Now we complete the proof of the main results of the paper, Theorems \ref{thm:main-HC} and
\ref{thm:motivated}.

\begin{proposition} \label{prop:ahora}
Let $\sigma_c$ be any critical value.
Suppose $n\leq 4$, and let $\bn$ be any type except $\bn=\bn_0=((n),(1))$, $\sigma_c=\sigma_m$.
Suppose that $[\cN_\sigma(n',1,d',d_o)]$ are in $\RC$, for any $n'<n$. 
 Then the stratum $[X^\pm(\bn)]\in K(\Mot)$ lies in $\RC$.
\end{proposition}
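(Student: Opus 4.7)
The strategy is to enumerate every type $\bn=((n_i),\ba_1,\ldots,\ba_r)$ consistent with the constraint $\sum_{i,j}a_{ij}n_i+n'=n\leq 4$ (with $n'\geq 1$ in general, and $n'\geq 0$ when $\sigma_c=\sigma_m$) and to reduce each case to one of Propositions \ref{prop:no-finite-group}--\ref{prop:non-coprime4}, or to a direct extension of those arguments. The hypothesis $[\cN_{\sigma}(n',1,d',d_o)]\in\RC$ for $n'<n$ has to be upgraded to the slightly stronger inputs that those propositions need, namely $[\cN_{\sigma_c}^s(n',1,d',d_o)]\in\RC$, $[M^s(n'',d'')]\in\RC$ for $\gcd(n'',d'')=1$, and $[\cD_m(\tilde n,1,\tilde d,d_o)]\in\RC$.

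All these upgrades are obtained by a sub-induction on the rank $<n$. Since for non-critical $\sigma$ we have $\cN_\sigma=\cN_\sigma^s$, the stable locus at a critical value $\sigma_c$ is recovered by subtracting the properly $\sigma_c$-polystable locus, which decomposes into strata of strictly smaller rank ingredients. To obtain $[M^s(n'',d'')]\in\RC$ for $\gcd(n'',d'')=1$, I choose $d_o$ with $d''/n''-d_o>2g-2$; then \eqref{eqn:Um} exhibits $\cU_m(n'',1,d'',d_o)$ as a Zariski locally trivial projective bundle over $M^s(n'',d'')\times\Jac C$, the complement $\cD_m$ stratifies into flip strata $X^+(\bn')$ of rank $<n''<n$ which lie in $\RC$ by sub-induction, and solving $[\cU_m]=[\cN_{\sigma_m^+}]-[\cD_m]$ together with the fibration formula yields $[M^s(n'',d'')]\in\RC$. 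The same subtraction provides $[\cD_m(\tilde n,1,\tilde d,d_o)]\in\RC$.

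With the auxiliary data in place, the enumeration is short because $r\leq n-n'\leq 4$. For $r=1$, one of Propositions \ref{prop:no-finite-group}, \ref{prop:r=1}, or \ref{prop:non-coprime2} applies depending on whether $\frS_\bn$ is trivial and whether some $\gcd(n_i,d_i)>1$. For $r=2$, a finite listing of all types with $n\leq 4$ shows that every case is either trivially symmetric (so a Proposition \ref{prop:no-finite-group}-style argument works) or has the shape \eqref{eqn:bn r=2 I}--\eqref{eqn:bn r=2 non-coprime II} up to swapping $\ba_1$ and $\ba_2$, in which case Propositions \ref{prop:r=2.1}--\ref{prop:non-coprime4} apply verbatim. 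For $r\geq 3$ the rank constraint forces all $n_i=1$ and each $\ba_j$ to be a standard basis vector, so coprimality is automatic and all the ingredients of $\cM(\bn)$ are Jacobians of $C$.

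The main obstacle is precisely the $r\geq 3$ regime, not covered by any of the previous propositions; it has to be treated by extending the arguments of Section \ref{sec:r=2} to an $r$-step tower. The idea is to build $X^+(\bn)$ as an iterated projective fibration over $\cM(\bn)$, compactify it across the diagonals $\Delta\subset\prod_i M^s(1,d_i)$, subtract the diagonal loci stratified by partitions $\pi\leq\pi_0$ of $[b]$, and finally take quotients by the stabilizers in $\frS_\bn$ of the data $(\pi,I_1,\ldots,I_{r-1})$ recording the partition together with the index sets governing each sub-fibration. At every step the quotient produces a symmetric product or a $\lambda$-operation applied to a class already in $\RC$, and $\RC$ is closed under these operations by \eqref{eqn:RC}. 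The delicate step is the combinatorial verification that the stabilizer calculus for $r$-tuples of partitions and subsets still yields $\lambda$-expressions of classes in $\RC$; once this is checked, $[X^\pm(\bn)]\in\RC$ follows as in Sections \ref{sec:r=1}--\ref{sec:r=2}.
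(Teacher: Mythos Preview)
Your plan is broadly aligned with the paper's, and your remark that the hypothesis must be upgraded (via a sub-induction on rank) to the inputs actually used by Propositions~\ref{prop:no-finite-group}--\ref{prop:non-coprime4} is correct; the paper leaves this implicit, effectively borrowing it from the outer induction in Theorem~\ref{thm:ahora}.

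However, your assessment that the $r\geq 3$ regime is ``the main obstacle'' and ``not covered by any of the previous propositions'' is mistaken, and this is precisely where your plan diverges from the paper. You yourself observe that for $r\geq 3$ (and $n'\geq 1$) the constraint $\sum_{i,j}a_{ij}n_i+n'\leq 4$ forces every $n_i=1$ and each $\ba_j$ to be a standard basis vector, say $\ba_j=e_{\sigma(j)}$. It then follows at once that $\frS_\bn=\{1\}$: the column of index $i$ in the matrix $(a_{ij})$ is the indicator vector of $\sigma^{-1}(i)\subset[r]$, and for $i\neq i'$ these preimages are disjoint, so the columns cannot coincide unless both indices are unused. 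Since also $\gcd(n_i,d_i)=\gcd(1,d_i)=1$, Proposition~\ref{prop:no-finite-group} applies directly to every such type. No extension of the Section~\ref{sec:r=2} machinery to $r$-step towers is required, and the combinatorial stabilizer analysis you sketch for tuples $(\pi,I_1,\ldots,I_{r-1})$ is unnecessary.

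This is exactly how the paper's proof is organized. After the $r=1$ cases are handled (Propositions~\ref{prop:no-finite-group}, \ref{prop:r=1}, \ref{prop:non-coprime2}), a direct enumeration of the remaining types with $n\leq 4$ and \emph{non-trivial} $\frS_\bn$ yields only the two $r=2$ types covered by Propositions~\ref{prop:r=2.1} and~\ref{prop:r=2.2}; a separate enumeration of the non-coprime types yields only five, all with trivial $\frS_\bn$, covered by Propositions~\ref{prop:non-coprime2}--\ref{prop:non-coprime4}. Every other type, in particular every $r\geq 3$ type, falls under Proposition~\ref{prop:no-finite-group}. Your plan would reach the same conclusion, but by a much longer route whose hardest step (the $r$-step tower) is in fact vacuous for $n\leq 4$.
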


\begin{proof}
Assume first that $\gcd(n_i,d_i)=1$, for all $i=1,\ldots,b$.
If the group $\frS_\bn$ is trivial, then Proposition \ref{prop:no-finite-group}
gives the result.
Now suppose that the group $\frS_\bn$ is non-trivial. If $r=1$, then
the general result we prove in Proposition \ref{prop:r=1} implies that $[X^\pm(\bn)] \in\RC$.
As $n\leq 4$, the remaining cases are just
\begin{itemize}
\item $r=2$, $b=3$, $n'=1$ and $(n_i)=(1,1,1)$, $\ba_1=(0,0,1)$, $\ba_2=(1,1,0)$;
\item $r=2$, $b=3$, $n'=1$ and $(n_i)=(1,1,1)$, $\ba_1=(1,1,0)$, $\ba_2=(0,0,1)$.
\end{itemize}
These situations are included in Propositions \ref{prop:r=2.1} and \ref{prop:r=2.2}.

Assume now that there exists some pair $(n_i,d_i)$ with $n_i$ and $d_i$ not coprime. Since $n\leq 4$, this situation can only occur in one of the following cases (where we always have $\frS_\bn$  trivial):
\begin{itemize}
\item $r=1$, $b=2$, $n'=1$ and $(n_i)=(2,1)$, $\ba_1=(1,1)$;
\item $r=1$, $b=1$, $n'=2$ and $(n_1)=(2)$, $\ba_1=(1)$;
\item $r=1$, $b=1$, $n'=1$ and $(n_1)=(3)$, $\ba_1=(1)$;
\item $r=2$, $b=2$, $n'=1$ and $(n_i)=(2,1)$, $\ba_1=(0,1)$, $\ba_2=(1,0)$;
\item $r=2$, $b=2$, $n'=1$ and $(n_i)=(2,1)$, $\ba_1=(1,0)$, $\ba_2=(0,1)$.
\end{itemize}
The first three possibilities are covered by Proposition \ref{prop:non-coprime2}, the fourth by Proposition \ref{prop:non-coprime3} and the last by Proposition \ref{prop:non-coprime4}.
\end{proof}

Taking the isomorphism \eqref{eqn:isom} into account, the following result covers Theorem \ref{thm:motivated}.

\begin{theorem} \label{thm:ahora}
Let $n\leq 4$. For any $\sigma$, the moduli spaces $\cN_\s^s(n,1,d,d_o)$ lie in $\RC$. The same holds for $M^s(n,d)$ whenever $\gcd(n,d)=1$.
\end{theorem}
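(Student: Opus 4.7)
The strategy is a simultaneous induction on $n$ of both statements: that $[\cN_\sigma^s(n,1,d,d_o)] \in \RC$ for every $\sigma$, and that $[M^s(n,d)] \in \RC$ whenever $\gcd(n,d)=1$. The base case $n=1$ is direct: $M^s(1,d)=\Jac^d C$ lies in $\RC$ by \eqref{eqn:wRC}, and the moduli spaces $\cN_\sigma^s(1,1,d,d_o)$ are built by iterated extensions of line bundles and admit an explicit description in terms of Jacobians and symmetric products of $C$, all manifestly in $\RC$.

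For the inductive step, assume both statements for all $n' < n$. By the inductive hypothesis, $[\cN_\sigma^s(n',1,d',d_o)] \in \RC$ for every $n' < n$ and every $\sigma$ (critical or not, using $[\cN_{\sigma_c}^s]=[\cN_{\sigma_c^+}^s]-[\cS_{\sigma_c^+}]$ at critical values), and $[M^s(n'',d'')] \in \RC$ for $n''<n$ with $\gcd(n'',d'')=1$. Moreover, $[\cD_m(n',1,d',d_o)] = [\cN_{\sigma_m^+}(n',1,d',d_o)] - [\cU_m(n',1,d',d_o)] \in \RC$, since $[\cU_m(n',\ldots)]$ is computed as $[M^s(n',d')] \cdot [\Jac C] \cdot [\PP^N]$ in the coprime case (and one reduces to the coprime case by tensoring with a line bundle, which changes the degree modulo $n'$ without altering the isomorphism class of the moduli space). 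All hypotheses of Proposition \ref{prop:ahora} are thus in force at rank $n$, so every stratum $[X^\pm(\bn)]$ (with $\bn \neq \bn_0$ at $\sigma_c = \sigma_m$) lies in $\RC$.

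The key observation is that at the upper endpoint $\sigma=\sigma_M^-$, we have $\cN_{\sigma_M^-}^s = \cS_{\sigma_M^-}$, which is a finite disjoint union of flip loci $X^-(\bn)$ (each with $n' < n$ in its recipe, so covered by the previous paragraph). Hence $[\cN_{\sigma_M^-}^s] \in \RC$. Now wall-cross downward through the finitely many critical values: at each $\sigma_c$,
\[
[\cN_{\sigma_c^+}^s] - [\cN_{\sigma_c^-}^s] \;=\; [\cS_{\sigma_c^+}] - [\cS_{\sigma_c^-}] \;\in\; \RC,
\]
by the flip relation \eqref{eqn:flip}, because both flip loci are disjoint unions of strata $X^\pm(\bn)$ just shown to lie in $\RC$. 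Consequently $[\cN_\sigma^s(n,1,d,d_o)] \in \RC$ for every non-critical $\sigma$, and the identity $[\cN_{\sigma_c}^s] = [\cN_{\sigma_c^+}^s] - [\cS_{\sigma_c^+}]$ handles critical values.

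It remains to deduce $[M^s(n,d)] \in \RC$ when $\gcd(n,d)=1$. Choose $d_o$ with $d/n - d_o > 2g-2$, so that $\pi:\cU_m \to M^s(n,d) \x \Jac C$ is a projective fibration with fibers $\PP H^0(E \otimes L^*)$ of constant dimension $N$. Since $\gcd(n,d)=1$, a universal bundle exists on $M^s(n,d) \x C$, so this fibration is Zariski locally trivial and $[\cU_m] = [M^s(n,d)] \cdot [\Jac C] \cdot [\PP^N]$. Combined with $[\cU_m] = [\cN_{\sigma_m^+}] - [\cD_m] \in \RC$, where $[\cD_m]$ is the sum over strata $X^+(\bn)$ with $\bn \neq \bn_0$, one obtains $[M^s(n,d)] \in \RC$. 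The main obstacle is keeping all three inductive quantities — $[M^s]$, $[\cN_\sigma^s]$, and $[\cD_m]$ — in lockstep so that Proposition \ref{prop:ahora} may legitimately be invoked at rank $n$; this is addressed by bundling them into a single inductive statement, as above.
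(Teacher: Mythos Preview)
Your overall strategy matches the paper's: induct on $n$, start at $\sigma_M^-$ where $\cN_{\sigma_M^-}^s = \cS_{\sigma_M^-}$ is built entirely from lower-rank data, wall-cross downward using Proposition~\ref{prop:ahora}, and at the bottom read off $[M^s(n,d)]$ from $[\cU_m]$ in the coprime case.

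There is, however, one genuine error. You claim that for $n' < n$ one can obtain $[\cD_m(n',1,d',d_o)] \in \RC$ from $[\cN_{\sigma_m^+}(n',\ldots)] - [\cU_m(n',\ldots)]$, handling $[\cU_m]$ in the coprime case and then ``reducing to the coprime case by tensoring with a line bundle, which changes the degree modulo $n'$.'' This is false: tensoring a rank-$n'$ bundle by a line bundle of degree $e$ shifts the degree by $n'e$, so the residue of $d'$ modulo $n'$ --- and hence $\gcd(n',d')$ --- is unchanged. In the genuinely non-coprime situation (which does arise: e.g.\ $n_1 = 2$ with $d_1$ even, or $n_1 = 3$ with $3 \mid d_1$, exactly the cases feeding into Proposition~\ref{prop:non-coprime2}) there is no universal bundle on $M^s(n',d')$, the fibration $\cU_m \to M^s(n',d') \times \Jac C$ is not known to be Zariski locally trivial, and you cannot factor $[\cU_m]$ as a product. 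Your derivation of $[\cD_m(n',\ldots)] \in \RC$ from the stated two-clause inductive hypothesis therefore fails precisely where it is needed.

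The fix is the one you yourself gesture at in your final sentence: carry $[\cD_m(n',1,d',d_o)] \in \RC$ (equivalently $[\cU_m(n',\ldots)] \in \RC$) as a third clause of the inductive hypothesis, for \emph{all} $n' < n$ and \emph{all} $d', d_o$, coprime or not. Your last paragraph already establishes this clause at rank $n$ via $[\cD_m] = \sum_{\bn \neq \bn_0} [X^+(\bn)]$, so the induction closes cleanly. This is exactly what the paper does: it proves $[\cD_m]\in\RC$ at rank $n$ directly from the strata decomposition (not via $[\cU_m]$), and that feeds into the next step through the hypothesis of Proposition~\ref{prop:non-coprime2}. Drop the tensoring sentence, strengthen the inductive package to three clauses, and your argument coincides with the paper's.
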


\begin{proof}
By definition, $[\Sym^k C]\in \RC$, for any $k\geq 1$.
Also, by \eqref{eqn:wRC} and \eqref{eqn:RC}, we have $[\Jac C]\in \RC$. So the result is true for $n=1$.

Now we proceed inductively. Let $n'<n$ and assume that the motives of
$M^s(n',d')$ and $\cN_\sigma(n',1,d',d_o)$ lie in $\RC$.
Then Proposition \ref{prop:ahora} says that $X^\pm(\bn)$ is also motivated by $C$
(except for the case $\sigma_c=\sigma_m$, $\bn=\bn_0$).
So if $\sigma_c>\sigma_m$,
 $$
  [\cS_{\s_c^\pm}]=\Big[ \, \bigsqcup_{\bn} \, X^\pm(\bn) \Big] \in\RC.
  $$
In particular, since $\cS_{\sigma_M^-}=\cN_{\sigma_M^-}$, we see that $\cN_{\sigma_M^-}$
is motivated by $C$. From \eqref{eqn:flip}, we conclude that $[\cN_\s^s(n,1,d,d_o)]\in\RC$,
for any $\sigma>\sigma_m$.

On the other hand, for $\sigma_c=\sigma_m$ we have
 $$
  [\cD_m]=\Big[ \bigsqcup_{\bn\neq \bn_0} X^+(\bn) \Big] \in\RC.
  $$
Therefore also
  $$
  \cU_m=\cN_{\s_m^+}^s(n,1,d,d_o)\setminus\cD_m 
   $$
is in $\RC$.

Finally, if $\gcd(n,d)=1$, using the projection in (\ref{eqn:alfa}) and considering, $d/n-d_o>2g-1$,  we deduce that $M^s(n,d)$ also lies in $\RC$.
\end{proof}

Note that Theorem \ref{thm:ahora} also applies to critical values $\sigma=\sigma_c$.
Also, it is worth noticing that the result that the moduli spaces $M^s(n,d)$, for any
$n,d$ coprime, are motivated by $C$, has already been proved by Del Ba\~no in Theorems 4.5 and 4.11 of \cite{DelBano}.

\begin{corollary} \label{cor:Hodge-cN}
For a generic curve $C$, for generic $\sigma$, and for $n\leq 4$,
$\cN_{\s}(n,1,d,d_o)$ satisfies the Hodge conjecture.
\end{corollary}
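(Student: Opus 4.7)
The plan is to chain together the three main ingredients already developed in the paper. First, I would observe that for generic (i.e., non-critical) $\sigma \in I$, Theorem \ref{thm:pairs} guarantees that $\cN_\sigma(n,1,d,d_o) = \cN_\sigma^s(n,1,d,d_o)$ is a smooth projective variety, so the Hodge Conjecture for it is equivalent to the vanishing of $\Theta$ on its class, by the characterisation recalled in Section \ref{subsec:HC}.

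Next, I would invoke Theorem \ref{thm:ahora}, which has just been proved: for $n \leq 4$ and any $\sigma$, one has $[\cN_\sigma^s(n,1,d,d_o)] \in \RC$. Thus the class of our moduli space lies in the subring of $\hat K(\Mot)$ generated (through products, symmetric products, powers of $\LL$, and completion) by $[C]$.

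Finally, I would apply Proposition \ref{prop:generic}: for a generic curve $C$, $\Theta(\RC) = 0$. Combining this with the previous step gives $\Theta(\cN_\sigma(n,1,d,d_o)) = 0$, and since the variety is smooth and projective, this is exactly the statement of the Hodge Conjecture for it. So the whole corollary is essentially immediate once the two heavy theorems are in place.

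There is no real obstacle here, since both inputs have been established in the preceding sections; the only subtlety worth flagging is the double use of the word \emph{generic}: the condition on the curve (belonging to the complement of a countable union of closed subsets of the moduli of curves, used via Proposition \ref{prop:generic}) is independent from the condition on the parameter (non-critical value of $\sigma$, used so that $\cN_\sigma = \cN_\sigma^s$ is smooth projective). Both are open-dense type conditions, and the proof is essentially a one-line assembly of Theorem \ref{thm:ahora} with Proposition \ref{prop:generic}.
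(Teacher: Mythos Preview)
Your proposal is correct and follows exactly the same route as the paper's own proof: use Theorem \ref{thm:pairs} to get smoothness and projectivity for non-critical $\sigma$, invoke Theorem \ref{thm:ahora} to place the class in $\RC$, and then apply Proposition \ref{prop:generic} to conclude $\Theta=0$. The paper's version is simply more terse, and your remark disentangling the two senses of ``generic'' is a helpful clarification not spelled out there.
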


\begin{proof}
Recall that for generic $\sigma$, $\cN_\s^s(n,1,d,d_o)=\cN_\s(n,1,d,d_o)$ is smooth and projective.
Thus the result follows by applying the map $\Theta$ defined in \eqref{eqn:Theta}, and
noting that $\RC\subset \ker\Theta$ by Proposition \ref{prop:generic}.
\end{proof}

The proof of the Hodge conjecture for $M(n,d)$, where $n,d$ are coprime
and $C$ is generic, is given in \cite[Corollary 5.9]{DelBano}.


A comment about the restriction $n\leq 4$ is in order. The main obstacle to
give the result for arbitrary rank is the geometrical analysis of the flip loci.

It is clear that $\tilde X^{\pm}(\bn)$ is in $\RC$ by induction on the rank.
However, we have to quotient by the finite group $\frS_\bn$. For a \emph{smooth projective}
variety $X$, the motive $h(X/\frS_\bn)$ is --- see \eqref{quotient motive} --- a
sub-motive of $h(X)$, and with this it would
follow that if $[X]\in \RC$ then $[X/\frS_\bn]\in\RC$. However, the same statement
does not hold for quasi-projective varieties (our spaces $\tilde X^{\pm}(\bn)$ are smooth
but quasi-projective). That is why we have to carry a finer
analysis of the flip loci.

An alternative route would be to work with stacks in the spirit of \cite{hein},
to prove that the motive of $\cN_\sigma$ is in $\RC$. This misses the geometrical
description, but it might be applied for arbitrary rank. 
Indeed, very recently, after the submission of this paper, we have been
communicated that this has been undertaken in \cite{MR}.

\end{document}